\documentclass[
  11pt,
  reqno
]{amsart}

\usepackage{fontspec}

\usepackage{mathtools}
\usepackage{amssymb}
\usepackage{amsthm}
\usepackage{mathrsfs}

\usepackage{microtype}

\usepackage[
  backend=biber,
  style=numeric,
  doi=true,
  sorting=nyt,
  eprint=true,
  giveninits=true,
  maxnames=99
]{biblatex}
\DeclareFieldFormat[article]{title}{\mkbibemph{#1}}
\renewbibmacro*{in:}{%
  \ifentrytype{article}
    {}
    {\printtext{\bibstring{in}\intitlepunct}}}

\usepackage{hyperref}
\usepackage[nameinlink,noabbrev]{cleveref}

\newcommand{\N}{\mathbb{N}}
\newcommand{\Z}{\mathbb{Z}}

\DeclarePairedDelimiter{\abs}{\lvert}{\rvert}

\DeclareMathOperator{\card}{Card}

\DeclareMathOperator{\Comp}{Comp}
\DeclareMathOperator{\Met}{Met}
\DeclareMathOperator{\UMet}{UMet}

\newcommand{\yoemph}[1]{\emph{#1}}

\newcommand{\yodis}{Z}

\newcommand{\yoomega}{\omega_{0}}

\newcommand{\yocantor}{{}^{\yoomega}2}
\newcommand{\youltsp}[2]{\UMet(#1;#2)}

\DeclareMathOperator{\youmetdis}{\mathcal{UD}}

\newcommand{\youcompm}[2]{\mathrm{UComp}(#1; #2)}

\newcommand{\youcpm}[2]{\mathrm{UCPM}(#1; #2)}

\newcommand{\yocomp}[1]{\Comp(#1)}
\newcommand{\yometsp}[1]{\Met(#1)}

\newcommand{\yocpm}[1]{\mathrm{CPM}(#1)}

\newcommand{\yometdis}[1]{\mathcal{D}_{#1}}

\newcommand{\yocase}[2]{Case #1.~[#2]:}

\newcommand{\yometbd}[1]{\mathrm{BMet}(#1)}
\newcommand{\yocpmbd}[1]{\mathrm{BCPM}(#1)}

\newcommand{\yoslameto}{\mathbf{M}_{<\infty}}
\newcommand{\yoslamet}[1]{\yoslameto(#1)}

\newcommand{\youslameto}{\mathbf{UM}_{<\infty}}
\newcommand{\youslamet}[2]{\youslameto(#1;#2)}

\newcommand{\yoccl}{\mathfrak{C}}
\newcommand{\yocclq}[1]{\yoccl(#1)}
\newcommand{\youccl}{\mathfrak{C}^{R}}
\newcommand{\youcclq}[1]{\youccl(#1)}

\theoremstyle{plain}
\newtheorem{theorem}{Theorem}[section]
\newtheorem{lemma}[theorem]{Lemma}
\newtheorem{proposition}[theorem]{Proposition}
\newtheorem{corollary}[theorem]{Corollary}

\theoremstyle{definition}

\theoremstyle{remark}

\newenvironment{acknowledgements}{%
  \medskip\noindent\textit{Acknowledgements.}\ }{\par}
\newenvironment{useofai}{%
  \medskip\noindent\textbf{Use of AI.}\ }{\par}

\crefname{theorem}{theorem}{theorems}
\Crefname{theorem}{Theorem}{Theorems}

\crefname{lemma}{lemma}{lemmas}
\Crefname{lemma}{Lemma}{Lemmas}

\crefname{proposition}{proposition}{propositions}
\Crefname{proposition}{Proposition}{Propositions}

\crefname{corollary}{corollary}{corollaries}
\Crefname{corollary}{Corollary}{Corollaries}

\crefname{claim}{claim}{claims}
\Crefname{claim}{Claim}{Claims}

\crefname{definition}{definition}{definitions}
\Crefname{definition}{Definition}{Definitions}

\crefname{example}{example}{examples}
\Crefname{example}{Example}{Examples}

\crefname{question}{question}{questions}
\Crefname{question}{Question}{Questions}

\crefname{remark}{remark}{remarks}
\Crefname{remark}{Remark}{Remarks}

\title[Borelness of moduli spaces of metrics]{
  Borelness of Moduli Spaces of Metrics Implies Separability}

\author{Yoshito Ishiki}
\address{Department of Mathematical Sciences\\
Tokyo Metropolitan University\\
Minami-osawa, Hachioji, Tokyo 192-0397, Japan}
\email{ishiki-yoshito@tmu.ac.jp}

\author{Tomoki Uda}
\address{Department of Electronics and Communication Technology
Faculty of Science and Technology\\
Nanzan University\\
18 Yamazato-cho, Showa-ku, Nagoya 466-8673, Japan}
\email{uda0@nanzan-u.ac.jp}

\date{\today}

\subjclass[2020]{Primary 54E35; Secondary 03E15, 54E50}

\keywords{
  spaces of metrics,
  bounded metrics,
  descriptive set theory,
  Borel sets,
  complete metrizability
}

\begin{document}

\begin{abstract}
Let X be a metrizable space, and let Met(X) denote the space of metrics compatible with the topology of X, regarded as a subspace of the space of continuous pseudometrics with the supremum-metric topology. We first prove that if Z is a discrete space of cardinality aleph-one, then Met(Z) is not Borel. As a consequence,  if Met(X) is Borel, then X is separable. Combined with a theorem of Koshino, our result yields that the space of bounded compatible metrics on a metrizable space X is completely metrizable if and only if X is sigma-compact.  We also establish non-Archimedean analogues for spaces of ultrametrics.
\end{abstract}

\maketitle


\section{Introduction}

Let
$X$
be a metrizable space.  Moduli spaces of metrics compatible with the
topology of
$X$,
endowed with the topology of uniform convergence, provide
a natural meeting point of metric geometry, infinite-dimensional topology,
and descriptive set theory.
We write
$\yometsp{X}$
for the space of all
compatible metrics and
$\yometbd{X}$
for its subspace of bounded metrics.
They are regarded as subspaces of the space
$\yocpm{X}$
of continuous
pseudometrics,
equipped with the supremum-metric topology.

Koshino~\cite{zbMATH07960706}
studied the Borel complexity of the space
$\yometbd{X}$
 of bounded compatible metrics, which  is denoted by
$\mathrm{AM}(X)$
 in his paper.
  His main theorem relates the absolute Borel
class of $\yometbd{X}$ to that of $X$.  In particular, for a separable
metrizable space $X$, he proved that $X$ is $\sigma$-compact if and only if
$\yometbd{X}$ is completely metrizable
\cite[Corollary~1.2]{zbMATH07960706}.

To remove the separability assumption of this characterization,
the first author  proposed
\cite[Conjecture~5.1]{Ishiki2024IsometricExtensor},
which asks whether
$\yometsp{\yodis}$
 is not completely metrizable when
 $\yodis$
 is a discrete space of
cardinality
$\aleph_{1}$.

As an affirmative answer to the conjecture,
we first prove that if
$\yodis$
is a discrete space of cardinality $\aleph_1$,
then
$\yometsp{\yodis}$
 is not Borel
(\Cref{thm:bounded-discrete-nonborel}).
As a consequence,
for every metrizable space
$X$
if
$\yometsp{X}$
is Borel,
then
$X$
is separable
(\Cref{thm:borel-separability}).
Combined with a theorem of Koshino, our result yields that,
for every metrizable space
$X$ that is not necessarily separable,
the space
$\yometbd{X}$
is
completely metrizable
if and only if
$X$
is $\sigma$-compact
(\Cref{cor:bounded-metric-complete-sigma-compact}).

The proof of our main theorems
begins with a dense
non-Borel subset
 $S$
 of
 $[0,1]$ of
cardinality
$\aleph_{1}$.
Next, for a discrete space
$\yodis$ of cardinality
$\aleph_{1}$,
we construct a
family of bounded metrics on
$\yodis$
 that realizes a closed copy of
$[0,1]\setminus S$.
Since this complement is non-Borel,
we conclude that
$\yometbd{{\yodis}}$ is
non-Borel.
For the passage from a
closed discrete subspace
 to a general metrizable space, we use the
isometric extensor of metrics due to
the first author
\cite{Ishiki2024IsometricExtensor}.

We also show that
if the space of complete metrics on $X$
is Borel, then $X$ is separable
(\Cref{cor:complete-metric-borel-separability}).
Replacing $[0, 1]$ with
a
Cantor ultrametric space,
we also obtain
a
non-Archimedean version of our main results
(
\Cref{thm:discrete-ultrametric-nonborel,thm:ultrametric-borel-separability,cor:complete-ultrametric-borel-separability}
).

The
organization of the paper is as follows.
In Section~2, we recall the spaces of continuous pseudometrics and their
components, together with the descriptive-set-theoretic facts used in
the sequel.  In Section~3, we establish the metric coding on discrete
spaces and prove the Borel separability theorems, their
complete-metric variants, and the corresponding non-Archimedean results.

\begin{useofai}
OpenAI Codex was used in the preparation of this manuscript for language
editing,
\LaTeX{}
typesetting assistance, and exploration of proof
constructions.
 In particular, Codex suggested the initial construction
in \Cref{lem:metric-coding},
which the authors then verified and developed
into the related constructions in this paper.
The authors take full
responsibility for the mathematical content and the final version of the
manuscript.
\end{useofai}

\begin{acknowledgements}
  The second author wrote
  an earlier version
  \cite{uda2026ishikisconjecturemathrmmetdcompletely}
  of the present  paper,
  and the first author subsequently expanded and refined its results.
The first author  was supported by JSPS KAKENHI Grant Number JP24KJ0182.
\end{acknowledgements}

\section{Preliminaries}

\subsection{Spaces of continuous pseudometrics}
Let
$X$
be a topological space.
A map
$d\colon X\times X\to [0,\infty)$
is called a
\yoemph{continuous pseudometric}
if it is continuous and if
for all
$x,y,z\in X$
 we have
\[
d(x,x)=0,
\qquad
d(x,y)=d(y,x),
\qquad
d(x,z)\le d(x,y)+d(y,z).
\]
We denote by
$\yocpm{X}$
the set of all continuous pseudometrics on $X$.
It is equipped with the topology induced by the
supremum metric
\[
\yometdis{X}(d,e)
=\sup_{(x,y)\in X^2}\abs{d(x,y)-e(x,y)}.
\]
Equivalently, this topology is induced by the metric
$\min\{\yometdis{X},1\}$.
The subspace of all bounded members of $\yocpm{X}$ is denoted by
$\yocpmbd{X}$.

Let
$R\subseteq [0,\infty)$
be a range set, namely, a set containing $0$.
We call a range set $R$ \yoemph{characteristic} if, for every $t>0$,
there exists an $r\in R\setminus\{0\}$ such that $r\le t$.
Equivalently, $0$ is an accumulation point of $R\setminus\{0\}$.
A continuous pseudometric
$d$ on $X$ is an \yoemph{$R$-valued pseudo-ultrametric} if
\[
d(X^2)\subseteq R
\quad\text{and}\quad
d(x,z)\le d(x,y)\lor d(y,z)
\]
for all
$x,y,z\in X$.
We write
$\youcpm{X}{R}$
for the set of all continuous
$R$-valued pseudo-ultrametrics on $X$.
It is equipped with the non-Archimedean uniform metric
\[
\mathcal{UD}_{X}^{R}(d,e)
=\inf\left\{\varepsilon\in R\cup\{\infty\}\ \middle|\
\begin{array}{l}
d(x,y)\le e(x,y)\lor\varepsilon,\\
e(x,y)\le d(x,y)\lor\varepsilon
\end{array}
\ \text{for all }x,y\in X\right\}.
\]

\subsection{Components of spaces of metrics}\label{subsec:metric-components}
For a metrizable space
$Z$,
and for
$d, e\in \yometsp{Z}$,
we write
$d\sim e$
 if
$\yometdis{Z}(d, e)<\infty$.
Then
``$\sim$''
becomes an
equivalence relation
on
$\yometsp{Z}$.
We represent
$\yoslamet{Z}=\yometsp{Z}/\!\!\sim$.
For a member
$d\in\yometsp{Z}$,
we write
$\yocclq{d}$
as
the equivalence  class of
$d$.
Note that
each
$\yoccl \in \yoslamet{Z}$
is
a
(path-)connected component of
$\yometsp{Z}$, and
it
is also a clopen subset of
$\yometsp{Z}$.

For
 $d,e\in\youltsp{Z}{R}$,
 we likewise write
 $d\sim_R e$ if
$\youmetdis_Z^R(d,e)<\infty$,
and put
$\youslamet{Z}{R}=\youltsp{Z}{R}/\!\sim_R.$
For $d\in\youltsp{Z}{R}$,
we denote its equivalence class by
$\youcclq{d}$.
Thus each
$\youccl\in\youslamet{Z}{R}$ is an
\yoemph{$R$-component} of $\youltsp{Z}{R}$ and is
 clopen.
  Here
``component'' refers to this finite-distance equivalence relation, not to
connectedness.
Note that since
$\youmetdis_Z^R$ is an ultrametric, the connected and
path components are singletons.

\subsection{Borel sets of Polish spaces}
We use the terminology of
\cite{MR1321597}.
A subset
$A$
 of a Polish space is said to have the
 \yoemph{perfect set
property}
if either
 $A$
  is countable or
  $A$
  contains a nonempty perfect
subset.
The perfect set theorem for
Borel sets asserts that every Borel
subset of a Polish space has the perfect set property; see
\cite[Theorem~13.6]{MR1321597}.  Consequently, every uncountable Borel
subset of a Polish space has cardinality
$\mathfrak{c}$.
The
\yoemph{continuum hypothesis},
abbreviated \yoemph{CH}, is the
assertion
$\mathfrak{c}=\aleph_{1}$.

Let
$P$
 be a nonempty perfect Polish space.
A subset
$B$
of
$P$
 is called \yoemph{Bernstein}
 if, for every nonempty
perfect subset
$K$
of
$P$,
both
$K\cap B$
 and
  $K\setminus B$
   are nonempty
   (for the existence, see \cite[Example 8.24]{MR1321597}).

\begin{proposition}\label{prop:dense-aleph1-nonborel-interval}
There exists a dense subset
$S$
of
$[0,1]$
such that
$\card(S)=\aleph_{1}$
and
$S$
is not
Borel in
$[0,1]$.
\end{proposition}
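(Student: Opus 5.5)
We split into two cases according to whether CH holds, using the perfect set theorem for Borel sets and the existence of Bernstein sets recalled above.

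\emph{Case 1: CH fails, i.e.\ $\aleph_{1}<\mathfrak{c}$.} Choose any subset $S_{0}\subseteq[0,1]$ with $\card(S_{0})=\aleph_{1}$, and put $S=S_{0}\cup(\mathbb{Q}\cap[0,1])$. Then $S$ is dense and $\card(S)=\aleph_{1}$, since adding a countable set does not change an uncountable cardinality. If $S$ were Borel, it would be an uncountable Borel subset of the Polish space $[0,1]$. By the perfect set theorem it would then have cardinality $\mathfrak{c}>\aleph_{1}$, a contradiction. So $S$ is not Borel.

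\emph{Case 2: CH holds, i.e.\ $\mathfrak{c}=\aleph_{1}$.} Here cardinality alone cannot rule out Borelness, so we use a Bernstein set. Let $B\subseteq[0,1]$ be Bernstein, which exists by \cite[Example 8.24]{MR1321597}, and put $S=B\cup(\mathbb{Q}\cap[0,1])$.

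\begin{itemize}
\item $S$ is dense and $\card(S)\le\mathfrak{c}=\aleph_{1}$.
\item $B$ meets every nonempty perfect set, for instance uncountably many disjoint Cantor sets. Hence $B$ is uncountable, and so $\card(S)=\aleph_{1}$.
\item Suppose $S$ were Borel. Then $[0,1]\setminus S$ is Borel and contains the Bernstein-complement points outside the rationals. Any perfect set $K$ contains a perfect subset $K'\subseteq K\setminus\mathbb{Q}$, for example a Cantor set avoiding the countably many rationals. So $K'\setminus B\neq\emptyset$, and therefore $[0,1]\setminus S$ is uncountable.
\item By the perfect set theorem, $[0,1]\setminus S$ contains a nonempty perfect set $P$. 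Then $P\cap S=\emptyset$, so $P\cap B=\emptyset$, contradicting that $B$ is Bernstein. Hence $S$ is not Borel.
\end{itemize}

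The only point requiring care is the step in Case 2 where we need a perfect subset of a perfect set that avoids $\mathbb{Q}$. This follows because every nonempty perfect subset of $[0,1]$ contains $\mathfrak{c}$ many disjoint Cantor sets, so one of them misses the countable set $\mathbb{Q}$. An alternative is to note that a Bernstein set with a countable set added is still Bernstein, because a perfect set cannot be covered by $B$ together with a countable set. The rest is routine.
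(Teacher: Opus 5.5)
Your proof is correct and follows the paper's argument. You split on CH, use a Bernstein set when $\mathfrak{c}=\aleph_1$, and otherwise use the perfect set theorem with $\aleph_1<\mathfrak{c}$ for a set of size $\aleph_1$ containing a countable dense set. The one difference is that you adjoin $\mathbb{Q}\cap[0,1]$ to the Bernstein set; this is unnecessary, since a Bernstein set already meets every nondegenerate closed interval and is therefore dense, so your extra step about perfect sets avoiding $\mathbb{Q}$ can be dropped.
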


\begin{proof}
We divide the proof according to the continuum hypothesis.

\yocase{1}{CH holds}
Let
$S$
be a Bernstein subset of
$[0,1]$
of cardinality
 $\mathfrak{c}$.
Then
$S$
is dense and non-Borel
by the fact that
every
Borel set in
$[0, 1]$
has the perfect set property.
 Since
 $\card([0,1])=\mathfrak{c}=\aleph_1$,
 we have
$\card(S)=\aleph_1$.

\yocase{2}{CH fails}
Fix a countable dense subset
$Q$
of
 $[0,1]$.
Since
 $\aleph_1<\mathfrak{c}=\card([0,1])$, we may choose a subset
$S$ of $[0,1]$ with $Q\subseteq S$ and $\card(S)=\aleph_1$.
The set $S$ is uncountable and has cardinality
$<\mathfrak{c}$.
Thus the perfect set theorem for Borel sets implies that $S$ is not Borel.
\end{proof}

\begin{proposition}\label{prop:dense-aleph1-nonborel-cantor}
There exists a dense subset
$S$
of
$\yocantor$
such that
$\card(S)=\aleph_{1}$
and
 $S$
  is not
  Borel  in
  $\yocantor$.
\end{proposition}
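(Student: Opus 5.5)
We follow the proof of \Cref{prop:dense-aleph1-nonborel-interval} verbatim, with $[0,1]$ replaced by the Cantor space $\yocantor$. The only properties of $[0,1]$ used there are that it is a nonempty perfect Polish space of cardinality $\mathfrak{c}$ with a countable dense subset. All of these hold for $\yocantor$: it is compact metrizable, it has no isolated points, $\card(\yocantor)=2^{\aleph_0}=\mathfrak{c}$, and the eventually-zero sequences form a countable dense subset. We split into cases according to CH.

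\yocase{1}{CH holds}
Let $S$ be a Bernstein subset of $\yocantor$, which exists by \cite[Example 8.24]{MR1321597}.

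First, $S$ is dense. Every nonempty basic clopen set $U$ (a cylinder determined by a finite initial segment) is homeomorphic to $\yocantor$, so $U$ is itself a nonempty perfect subset. Hence $U\cap S\neq\emptyset$.

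Second, $S$ is not Borel. Suppose it were. By the perfect set theorem \cite[Theorem~13.6]{MR1321597}, $S$ is either countable or contains a nonempty perfect set $K$. The latter is impossible, since $K\setminus S$ must be nonempty. So $S$ is countable. Then the complement $\yocantor\setminus S$ is an uncountable Borel set, so it contains a nonempty perfect $K$, contradicting $K\cap S\neq\emptyset$.

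Third, $S$ has the right size. Since $\card(\yocantor)=\mathfrak{c}=\aleph_1$, we have $\card(S)\le\aleph_1$. Also $S$ is uncountable, because a countable set is Borel. Therefore $\card(S)=\aleph_1$.

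\yocase{2}{CH fails}
Fix the countable dense set $Q$ of eventually-zero sequences. Since $\aleph_1<\mathfrak{c}=\card(\yocantor)$, we can adjoin $\aleph_1$ further points to $Q$ to obtain $S\supseteq Q$ with $\card(S)=\aleph_1$. Then $S$ is dense because it contains $Q$. It is uncountable but of cardinality strictly less than $\mathfrak{c}$. Every uncountable Borel subset of a Polish space has cardinality $\mathfrak{c}$, so $S$ is not Borel.

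The only step that needs checking is density of the Bernstein set in Case~1. It reduces to the observation that nonempty clopen cylinders in $\yocantor$ are perfect, so no genuine obstacle is expected.
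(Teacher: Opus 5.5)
Your proof is correct, but it takes a different route from the paper. The paper does not repeat the argument. It derives the statement from \Cref{prop:dense-aleph1-nonborel-interval} in one line: $\yocantor$ is Borel isomorphic to $[0,1]$, so the image $f(S)$ of the interval set under a Borel isomorphism $f\colon[0,1]\to\yocantor$ has cardinality $\aleph_1$ and is non-Borel. That transfer is shorter and reuses the earlier proposition. As written, however, it does not give density, since a Borel isomorphism need not carry dense sets to dense sets. One has to add a countable dense set $D$ to $f(S)$. This is harmless: $f(S)\cup D$ still has cardinality $\aleph_1$, and it differs from the non-Borel set $f(S)$ by a countable, hence Borel, set. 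Your direct reproof avoids this issue entirely. In Case~1 you get density from the observation that cylinders are perfect, and in Case~2 from $S\supseteq Q$. You also derive $\card(S)=\aleph_1$ for the Bernstein set instead of assuming it. Finally, your argument works verbatim in any nonempty perfect Polish space. The cost is repeating the CH case split rather than citing the interval result.
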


\begin{proof}
The proposition follows from the fact that
$\yocantor$
is Borel
isomorphic to
$[0, 1]$.
\end{proof}

\section{Main Results}

\subsection{Archimedean situation}
Fix a set
$S\subseteq[0,1]$
given by
\Cref{prop:dense-aleph1-nonborel-interval},
and fix an injective index
\[
S=\{w_\alpha\mid\alpha<\omega_1\}.
\]
Put
\[
\yodis=\{p,q\}\sqcup\{b_\alpha\mid\alpha<\omega_1\}
\sqcup\{c_{\alpha,n}\mid\alpha<\omega_1,\ n\in\Z_{\ge 0}\}
\]
and
\[
C_\alpha=\{b_\alpha\}\sqcup\{c_{\alpha,n}\mid n\in\Z_{\ge 0}\}.
\]
Note that
$\card(\yodis)=\aleph_{1}$.
We equip
$\yodis$
with the discrete topology.

\begin{lemma}\label{lem:metric-coding}
For
$t\in[0,1]$,
$\alpha<\omega_1$,
and
 $n\in\Z_{\ge 0}$,
put
\[
l_n(\alpha,t)=\abs{t-w_\alpha}+2^{-n}.
\]
Define
$d_t(u,u)=0$,
and,
for distinct
 $u,v\in \yodis$,
 define
 $d_t(u,v)$
symmetrically by
\[
d_t(u,v)=
\begin{cases}
1+t &\{u,v\}=\{p,q\},\\
l_n(\alpha,t) &\{u,v\}=\{b_\alpha,c_{\alpha,n}\},\\
l_n(\alpha,t)+l_m(\alpha,t)
  &\{u,v\}=\{c_{\alpha,n},c_{\alpha,m}\},\ n\ne m,\\
4 &\text{otherwise}.
\end{cases}
\]
Then
$d_t$
is a bounded metric on the underlying set of
$\yodis$,
and
\[
d_t\in\yometsp{\yodis}\quad\Longleftrightarrow\quad t\in[0,1]\setminus S.
\]
\end{lemma}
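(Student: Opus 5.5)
The argument has two parts. First I check that $d_t$ is a bounded metric. Then I decide when it induces the discrete topology on $\yodis$. Recall that $d_t\in\yometsp{\yodis}$ exactly when $d_t$ is a metric whose topology is discrete, i.e.\ every point is $d_t$-isolated; continuity of $d_t$ on the discrete space $\yodis^2$ is automatic.

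\emph{Boundedness and positivity.} All values lie in $(0,4]$. Indeed $1+t\le 2$ and $l_n(\alpha,t)\le 1+1=2$, so sums of two such terms are at most $4$. Every value for distinct points is positive, since $2^{-n}>0$ and $1+t\ge1$. Symmetry holds by definition.

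\emph{Triangle inequality.} I would split into cases by how many of $x,y,z$ lie in a common block $C_\alpha$.

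If $x,y,z$ all lie in one $C_\alpha$, the metric is the path (star) metric of a star graph with centre $b_\alpha$ and edge lengths $l_n(\alpha,t)>0$. This is a tree metric, so the triangle inequality holds.

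Otherwise some pair, say $\{x,z\}$, lies in different blocks, or involves $p$ or $q$ in a pair other than $\{p,q\}$; then $d_t(x,z)=4$. The case $\{x,z\}=\{p,q\}$ needs separate attention: there $d_t(x,z)\le 2$, while every other distance from $p$ or $q$ equals $4$. In the remaining subcase, at least one of $d_t(x,y)$, $d_t(y,z)$ is also a cross-block distance equal to $4$, because $y$ cannot share a block with both $x$ and $z$. Hence the right-hand side is at least $4\ge d_t(x,z)$.

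Conversely, if $d_t(x,z)$ is a within-block value, it is at most $4$. Either $y$ is in the same block, which is the tree case, or $y$ is outside it, and then both $d_t(x,y)$ and $d_t(y,z)$ equal $4$. Either way the inequality holds. This case analysis is routine; the only care needed is that within-block values never exceed $4$.

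\emph{Topology.} The points $p$ and $q$ and every $c_{\alpha,n}$ are isolated. For $c_{\alpha,n}$, every other point is at distance at least $l_n(\alpha,t)\ge 2^{-n}$. So the only question is whether each $b_\alpha$ is isolated. Every point outside $C_\alpha$ is at distance $4$ from $b_\alpha$, and
\[
\inf_{n} d_t(b_\alpha,c_{\alpha,n})=\inf_n\bigl(\abs{t-w_\alpha}+2^{-n}\bigr)=\abs{t-w_\alpha}.
\]
Thus $b_\alpha$ is isolated if and only if $t\ne w_\alpha$. When $t=w_\alpha$, we have $c_{\alpha,n}\to b_\alpha$ in $d_t$, so $d_t$ does not induce the discrete topology.

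Hence $d_t\in\yometsp{\yodis}$ if and only if $t\ne w_\alpha$ for every $\alpha<\omega_1$, that is, if and only if $t\notin S$.

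No step is deep. The main point requiring care is the bookkeeping in the triangle inequality, especially the $\{p,q\}$ case and the verification that within-block values never exceed $4$.
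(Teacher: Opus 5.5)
Your proposal is correct and follows essentially the same route as the paper. The paper also uses the star (hedgehog) tree metric on each $C_\alpha$ and the constant distance $4$ between distinct blocks, each of diameter at most $4$, to get a bounded metric. It then computes isolation radii, showing that only $b_\alpha$ can fail to be isolated, and that this happens exactly when $t=w_\alpha$.
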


\begin{proof}
The restriction of
$d_t$ to $C_\alpha$
is the tree metric with centre
$b_\alpha$
and edge lengths
$l_n(\alpha,t)$(that is, it is a hedgehog space).
Distinct members of
$\bigl\{\{p,q\}\bigr\}\cup\{C_\alpha\mid\alpha<\omega_1\}$
have distance
$4$,
while every member has diameter at most
 $4$.
Thus $d_t$ is a metric and $d_t(\yodis^2)\subseteq[0,4]$.

We first assume that
$t\notin S$.
Then
\[
\inf_{v\ne c_{\alpha,n}}d_t(c_{\alpha,n},v)
=l_n(\alpha,t)\ge2^{-n}
\]
and
\[
\inf_{v\ne b_\alpha}d_t(b_\alpha,v)
=\abs{t-w_\alpha}>0.
\]
The analogous assertion is immediate for $p$ and $q$, so $d_t$ induces
the discrete topology.

Next we assume that $t=w_\beta\in S$ for some
$\beta<\omega_1$.
Then we have
\[
d_t(b_\beta,c_{\beta,n})=2^{-n}\longrightarrow0.
\]
Hence
$\{b_\beta\}$ is not open for the topology induced by $d_t$.
This completes the proof.
\end{proof}

\begin{lemma}\label{lem:closed-parameter-embedding}
The map
$\Phi\colon[0,1]\longrightarrow\yocpm{\yodis}$
defined by
\[
\Phi(t)=d_t,
\]
is a bi-Lipschitz embedding, and
the image
$\Phi([0, 1]\setminus S)$
is closed in
$\yometsp{\yodis}$.
\end{lemma}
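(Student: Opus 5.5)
We verify the bi-Lipschitz estimate by computing $\yometdis{\yodis}(d_t,d_s)$ directly from the case definition in \Cref{lem:metric-coding}, and then obtain closedness from the compactness of $[0,1]$ together with \Cref{lem:metric-coding}.

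\emph{Lipschitz bounds.} Fix $s,t\in[0,1]$. We go through the four cases of the definition of $d_t(u,v)$. On the pair $\{p,q\}$ the difference is $\abs{t-s}$. On a pair $\{b_\alpha,c_{\alpha,n}\}$ the difference is $\bigl|\abs{t-w_\alpha}-\abs{s-w_\alpha}\bigr|\le\abs{t-s}$, since the $2^{-n}$ terms cancel. On a pair $\{c_{\alpha,n},c_{\alpha,m}\}$ the difference is at most $2\abs{t-s}$. On all remaining pairs the difference is $0$. Hence
\[
\abs{t-s}\le\yometdis{\yodis}(d_t,d_s)\le2\abs{t-s},
\]
where the lower bound comes from the pair $\{p,q\}$ alone. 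So $\Phi$ is bi-Lipschitz onto its image. In particular $\Phi$ is injective and a homeomorphism onto $\Phi([0,1])$, and it takes values in $\yocpmbd{\yodis}$, since each $d_t$ is a bounded metric and continuity is automatic on a discrete space.

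\emph{Closedness.} Because $[0,1]$ is compact and $\Phi$ is continuous, $\Phi([0,1])$ is compact, hence closed in the metric space $\yocpm{\yodis}$. By \Cref{lem:metric-coding} and the injectivity of $\Phi$,
\[
\Phi([0,1])\cap\yometsp{\yodis}=\Phi([0,1]\setminus S).
\]
Therefore $\Phi([0,1]\setminus S)$ is the trace on $\yometsp{\yodis}$ of a closed subset of $\yocpm{\yodis}$, and so it is closed in $\yometsp{\yodis}$.

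The one point that needs care is this intersection identity. We must check that no $d_t$ with $t\in S$ lies in $\yometsp{\yodis}$, and \Cref{lem:metric-coding} gives exactly this. Otherwise the argument is routine. As an alternative to the compactness argument, one can argue sequentially: if $d_{t_k}\to e\in\yometsp{\yodis}$, the lower Lipschitz bound makes $(t_k)$ Cauchy, so $t_k\to t$ and $e=d_t$, and then \Cref{lem:metric-coding} forces $t\notin S$.
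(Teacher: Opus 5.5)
Your proof is correct and follows the paper's. The Lipschitz bounds come from evaluation at $(p,q)$ and the $1$-Lipschitz dependence of $l_n(\alpha,\cdot)$ on the parameter, as in the paper, and your ``alternative'' sequential argument is exactly the paper's closedness proof. Your main closedness argument is an equally valid and slightly slicker variant: it uses compactness of $\Phi([0,1])$ and the trace identity $\Phi([0,1])\cap\yometsp{\yodis}=\Phi([0,1]\setminus S)$, which in fact needs no injectivity. It also makes explicit the fact $F=L\cap\yometbd{\yodis}$ with $L$ closed, which is what \Cref{thm:bounded-discrete-nonborel} uses.
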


\begin{proof}
For
 $s,t\in[0,1]$,
 evaluation at $(p,q)$ gives
\[
\abs{s-t}\le\yometdis{\yodis}(d_s,d_t).
\]
Since $t\mapsto l_n(\alpha,t)$ is $1$-Lipschitz, the defining formula
gives
\[
\yometdis{\yodis}(d_s,d_t)\le2\abs{s-t}.
\]
Thus $\Phi$ is a bi-Lipschitz embedding.

We next show that the image is closed.
Take  $d\in\yometsp{\yodis}$  from  the closure of
$\Phi([0,1]\setminus S)$.
Take $t_k\in[0,1]\setminus S$ with
$d_{t_k}\to d$.  Evaluation at $(p,q)$ yields
\[
t_k\longrightarrow t:=d(p,q)-1\in[0,1].
\]
The Lipschitz estimate gives $d_{t_k}\to d_t$, hence $d=d_t$.  The metric
$d$
induces the discrete topology,
thus
\Cref{lem:metric-coding}
gives $t\notin S$.
Therefore $d=\Phi(t)$.
\end{proof}

\begin{theorem}\label{thm:bounded-discrete-nonborel}
    Let
    $\yodis$
    be a
    discrete space of
    cardinality
    $\aleph_{1}$.
    Then
    $\yometbd{\yodis}$
    is  non-Borel in
    $\yocpm{\yodis}$.
\end{theorem}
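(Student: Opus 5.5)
We argue by contradiction. Since all discrete spaces of cardinality $\aleph_{1}$ are homeomorphic, and a bijection between them induces an isometry of the corresponding spaces $\yocpm{\cdot}$ carrying $\yometbd{\cdot}$ onto $\yometbd{\cdot}$, we may take $\yodis$ to be the specific discrete space constructed above. Suppose that $\yometbd{\yodis}$ is Borel in $\yocpm{\yodis}$.

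Consider the map $\Phi\colon[0,1]\to\yocpm{\yodis}$ of \Cref{lem:closed-parameter-embedding}. It is continuous, so the preimage of a Borel set under $\Phi$ is Borel in $[0,1]$. Every $d_t$ is bounded by $4$ by \Cref{lem:metric-coding}. It is compatible with the discrete topology exactly when $t\in[0,1]\setminus S$. Hence
\[
\Phi^{-1}\bigl(\yometbd{\yodis}\bigr)=\{t\in[0,1]\mid d_t\in\yometsp{\yodis}\}=[0,1]\setminus S.
\]
Under our assumption this set is Borel in $[0,1]$. Then $S$ is Borel as well, which contradicts the choice of $S$ from \Cref{prop:dense-aleph1-nonborel-interval}.

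This preimage computation is the whole argument. The closedness of $\Phi([0,1]\setminus S)$ in \Cref{lem:closed-parameter-embedding} is not needed for this ambient formulation. It would matter only for showing that $\yometbd{\yodis}$ is not Borel intrinsically, that is, not absolutely Borel. For that version the plan is as follows.

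Suppose $\yometbd{\yodis}$ were Borel in its completion, or were an absolute Borel metrizable space. The set $\Phi([0,1]\setminus S)$ is closed in $\yometsp{\yodis}$, so it is also closed in the subspace $\yometbd{\yodis}$. A closed subset of an absolute Borel space is absolute Borel. Moreover, $\Phi$ is a homeomorphism onto its image, so $[0,1]\setminus S$ would itself be absolute Borel. An absolute Borel subset of $[0,1]$ is Borel in $[0,1]$, which again contradicts the choice of $S$.

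The only point needing care is this transfer through closed subsets and homeomorphic embeddings. It is standard for absolute Borel classes, but I would state it explicitly for non-separable metrizable spaces.
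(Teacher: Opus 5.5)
Your proposal is correct and is essentially the paper's argument: the paper's decisive step, $F=L\cap\yometbd{\yodis}$ with $L=\Phi([0,1])$ homeomorphic to $[0,1]$, is exactly your computation $\Phi^{-1}(\yometbd{\yodis})=[0,1]\setminus S$. You are also right that the closedness of $\Phi([0,1]\setminus S)$ is not needed for the statement as posed (Borel in $\yocpm{\yodis}$), since the paper's own deduction that $F$ is Borel in $L$ uses only that intersection.
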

\begin{proof}
  For the sake of contradiction,
suppose
that
$\yometbd{\yodis}$ is
 Borel in $\yocpm{\yodis}$.
  Let
$\Phi\colon[0,1]\longrightarrow\yocpm{\yodis}$
be a bi-Lipschitz embedding constructed in
\Cref{lem:closed-parameter-embedding}.
Put
$F=\Phi([0,1]\setminus S)$
and
$L=\Phi([0,1])$.
Then
$F$
is closed in
$\yometbd{\yodis}$ and
homeomorphic to
$[0,1]\setminus S$.
Since
$\yometbd{\yodis}$ is
 Borel  in
 $\yocpm{\yodis}$,
 the closed subset
$F$
should  be Borel in
$\yocpm{\yodis}$.
Thus,
the set
$F$
is also Borel  in
$L$
by
$F=L\cap \yometbd{\yodis}$.
This is a
contradiction
to
 \Cref{prop:dense-aleph1-nonborel-interval}.
\end{proof}

\begin{theorem}\label{thm:metric-component-nonborel}
Let
$\yodis$
be a discrete space of cardinality
$\aleph_1$.
Then every component
$\yoccl\in\yoslamet{\yodis}$
is  non-Borel in
$\yocpm{\yodis}$.
\end{theorem}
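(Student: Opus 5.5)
The plan is to reduce to the coding argument of \Cref{thm:bounded-discrete-nonborel}, translated into a given component. Fix a component $\yoccl\in\yoslamet{\yodis}$ and a representative $e\in\yoccl$. Since $\yodis$ is discrete of cardinality $\aleph_1$, we identify it with the set $\{p,q\}\sqcup\{b_\alpha\}\sqcup\{c_{\alpha,n}\}$ used in \Cref{lem:metric-coding}. The natural idea is to perturb $e$ by the coding metrics $d_t$, which are bounded and hence do not change the component. Concretely, put $\Psi(t)=e+d_t$ for $t\in[0,1]$. This is a sum of two pseudometrics, so it is a pseudometric. Moreover $\yometdis{\yodis}(e+d_s,e+d_t)=\yometdis{\yodis}(d_s,d_t)$, so by the estimates in \Cref{lem:closed-parameter-embedding} the map $\Psi$ is a bi-Lipschitz embedding of $[0,1]$ into $\yocpm{\yodis}$. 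Since $\yometdis{\yodis}(\Psi(t),e)\le 4$, every member of $\Psi([0,1])$ that is a metric lies in $\yoccl$.

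The key step is to check that the coding equivalence survives the perturbation, namely that $e+d_t\in\yometsp{\yodis}$ if and only if $t\notin S$. If $t\notin S$, then $d_t$ is uniformly discrete at each point. Since $e$ is a metric and $e+d_t\ge d_t$, every point is isolated, so $e+d_t$ is a compatible metric. If $t=w_\beta$, we need $e(b_\beta,c_{\beta,n})\to0$, and this is false for a general $e$: for instance, the discrete $\{0,1\}$-metric lies in a component in which it fails. This is the main obstacle. To get around it, I would modify the construction rather than add $d_t$ blindly. The sets $C_\alpha$ are only a labelling, so the labelling itself can be adapted to $e$.

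I would first try one of two approaches. The first is to replace the sum by a \emph{local replacement}: define $\Psi(t)$ to agree with $d_t$-style hedgehog distances on pairs inside each $C_\alpha$ and on $\{p,q\}$, and to agree with $e$ (suitably truncated and corrected by $\pm$ bounded amounts) on the remaining pairs. The defect from $e$ must stay bounded so that we remain in $\yoccl$, and the triangle inequality must hold. For the triangle inequality, I would use the standard gluing: take $\Psi(t)(u,v)=\min$ over paths through the hedgehog pieces, that is, the largest metric dominated by $\max\{e,\text{something}\}$ on cross pairs and equal to the hedgehog metric inside pieces. Bounded change holds because the pieces have diameter at most $4$, so the quotient-type metric differs from $e$ by at most a fixed constant. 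The second, cleaner approach is to choose the labelling so that $e$ restricted to each $C_\alpha\cup\{p,q\}$ is comparable to a fixed bounded metric, and then define $\Psi(t)=\max\{e',d_t\}$ after first replacing $e$ by a metric $e'\sim e$ that is constant ($=1$, say) on pairs inside each $C_\alpha$. I would try this second approach first. In both approaches the gluing step is where most of the care goes.

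With $\Psi$ in hand, the rest copies \Cref{lem:closed-parameter-embedding} and \Cref{thm:bounded-discrete-nonborel}. Evaluation at $(p,q)$ recovers $t$. Since $L=\Psi([0,1])$ is compact, $F=\Psi([0,1]\setminus S)=L\cap\yometsp{\yodis}=L\cap\yoccl$. Suppose $\yoccl$ were Borel in $\yocpm{\yodis}$. Then $F$ would be Borel in $L\cong[0,1]$, and pulling back by $\Psi$ would make $[0,1]\setminus S$, and hence $S$, Borel. This contradicts \Cref{prop:dense-aleph1-nonborel-interval}.
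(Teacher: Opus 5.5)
There is a genuine gap: you never produce a family $\Psi(t)$ in $\yoccl$ that satisfies the coding equivalence, and the one concrete candidate you commit to fails. Your second approach takes $e'\sim e$ compatible and sets $\Psi(t)=\max\{e',d_t\}$, so $\Psi(t)$ dominates $e'$. But any metric dominating a compatible metric on a discrete space is again compatible, because every point keeps a positive isolation radius. Explicitly, $\Psi(w_\beta)(b_\beta,c_{\beta,n})\ge 1$ for all $n$, so $\Psi(t)\in\yometsp{\yodis}$ for every $t$, including $t\in S$. This is the same monotonicity obstruction that kills $e+d_t$. Incidentally, $e+d_t$ fails for \emph{every} compatible $e$, not just some, since $e(b_\beta,c_{\beta,n})\to0$ is impossible when $b_\beta$ is $e$-isolated. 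To make $b_\beta$ a limit point you must \emph{replace} $e$ on the relevant pairs, not enlarge it.

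A second problem is that you label all of $\yodis$ by the coding set. If $e$ is unbounded, requiring $e$ on each $C_\alpha\cup\{p,q\}$ to be comparable to a fixed bounded metric forces $e$ to be bounded, since every point lies in a set containing $p$. In your first approach, the claim that the path-type gluing changes $e$ by only a bounded amount is unjustified. Chains through many pieces can save a fixed amount relative to $e$ at each step inside a piece. That gives at best a multiplicative comparison with $e$, which is not enough to stay in $\yoccl$.

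The missing idea, which is how the paper proceeds, is to \emph{localize} the coding to a bounded region and glue at a single point. Fix $z\in\yodis$; since $\yodis=\bigcup_{h\ge1}\{x\mid e(z,x)\le h\}$, some $E=\{x\mid e(z,x)\le h\}$ has $\card(E)=\aleph_1$. Transfer the metrics $d_t$ of \Cref{lem:metric-coding} to $E$ by a bijection sending $p$ to $z$, and call them $e_t$. Keep $e$ on $(\yodis\setminus E)\cup\{z\}$, and set $m_t(x,a)=e(x,z)+e_t(z,a)$ for $x\notin E$, $a\in E$. This construction has three useful features:
\begin{itemize}
\item A one-point amalgamation is automatically a metric, so no delicate gluing is needed.
\item $\abs{m_t-e}\le 2h+4$ everywhere, so $m_t\in\yoccl$ whenever it is compatible.
\item Since $m_t(x,a)\ge e(x,z)>h$ across the two parts, compatibility is decided entirely on $E$, so $m_t\in\yometsp{\yodis}$ if and only if $t\notin S$.
\end{itemize}
Evaluating at $z$ and the point of $E$ corresponding to $q$, together with the estimates of \Cref{lem:closed-parameter-embedding}, makes $t\mapsto m_t$ bi-Lipschitz. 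Your final paragraph ($F=L\cap\yoccl$, then pull back to $[0,1]$) then goes through verbatim.
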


\begin{proof}
Fix
$d\in\yoccl$
 and a point
 $z\in \yodis$.
 Since
\[
\yodis=\bigcup_{h\in\Z_{\ge1}}\{x\in \yodis\mid d(z,x)\le h\},
\]
there exists $h\in\Z_{\ge 1}$ such that
$E=\{x\in \yodis\mid d(z,x)\le h\}$
has cardinality $\aleph_1$.  Choose a bijection
$\theta\colon \yodis\to E$ with $\theta(p)=z$, and transfer the metrics from
\Cref{lem:metric-coding} to $E$ by putting
\[
e_t(a,b)=d_t\bigl(\theta^{-1}(a),\theta^{-1}(b)\bigr)
\qquad(a,b\in E).
\]
Thus $e_t(E^2)\subseteq[0,4]$, and
\[
e_t\in\yometsp{E}
\quad\Longleftrightarrow\quad
t\in[0,1]\setminus S.
\]

For $t\in[0,1]$, define a metric $m_t$ on the underlying set of $\yodis$ as
follows.  Its restrictions to $E$ and $(\yodis\setminus E)\cup\{z\}$ are
$e_t$ and $d$, respectively, and for $x\in \yodis\setminus E$ and $a\in E$
put
\[
m_t(x,a)=m_t(a,x)=d(x,z)+e_t(z,a).
\]
This is the amalgamation of the two metric spaces along their common
point $z$, and hence it is a metric.

In this situation, we can observe that
\[
m_t\in\yometsp{\yodis}
\quad\Longleftrightarrow\quad
t\in[0,1]\setminus S.
\]

Define
a map
$\Psi\colon[0,1] \longrightarrow\yocpm{\yodis}$
by
$\Psi(t)=m_t$.
Let
 $q_E=\theta(q)$.
 Evaluation at $(z,q_E)$,
 together with the
estimates in \Cref{lem:closed-parameter-embedding},
gives
\[
\abs{s-t}
\le\yometdis{\yodis}(m_s,m_t)
\le2\abs{s-t}.
\]
Thus $\Psi$ is a bi-Lipschitz embedding.
The  image
$\Psi([0, 1]\setminus S)$
is closed in
$\yoccl$.
To see this, suppose that $m_{t_n}\to m\in\yoccl$.  Evaluation
at $(z,q_E)$ shows that $t_n$ converges to some $t\in[0,1]$, and the
upper Lipschitz estimate yields $m_{t_n}\to m_t$.  Hence $m=m_t$.
Since $m$ is compatible with the discrete topology, the preceding
equivalence implies $t\notin S$.

By an argument similar to that in
\Cref{thm:bounded-discrete-nonborel},
we see that
$\yoccl$ is non-Borel.
\end{proof}

\begin{theorem}
\label{thm:borel-separability}
Let $X$ be a metrizable space.  If $\yometsp{X}$ is Borel  in
$\yocpm{X}$, then $X$ is separable.
\end{theorem}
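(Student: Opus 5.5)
We argue by contraposition: assume $X$ is metrizable and not separable, and show that $\yometsp{X}$ is not Borel in $\yocpm{X}$. A metrizable non-separable space contains a closed discrete subset of cardinality $\aleph_1$. To see this, fix a compatible metric on $X$. Since $X$ is not separable, for some $\varepsilon>0$ a maximal $\varepsilon$-separated set is uncountable, and any $\varepsilon$-separated set is closed and discrete. Choose such a set $A$ and take $\yodis\subseteq A$ with $\card(\yodis)=\aleph_1$. Then $\yodis$ is closed and discrete in $X$, and by \Cref{thm:metric-component-nonborel} every component of $\yometsp{\yodis}$ is non-Borel in $\yocpm{\yodis}$.

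To transfer this to $X$, we use the isometric extensor of \cite{Ishiki2024IsometricExtensor}. For the closed subset $\yodis\subseteq X$ it provides a map $\Lambda\colon\yocpm{\yodis}\to\yocpm{X}$ with three properties:
\begin{enumerate}
\item $\Lambda$ is an isometric embedding with respect to $\yometdis{\yodis}$ and $\yometdis{X}$, at least on a suitable component or on bounded pseudometrics;
\item $\Lambda(e)|_{\yodis^2}=e$;
\item $\Lambda(e)\in\yometsp{X}$ exactly when $e\in\yometsp{\yodis}$.
\end{enumerate}
The first step is to recall the precise statement and check that it applies to pseudometrics and not only to metrics. If it does not, we restrict attention to the bi-Lipschitz arc $\Psi([0,1])\subseteq\yocpm{\yodis}$ built in \Cref{thm:metric-component-nonborel}, which consists of genuine metrics on the underlying set, and we need $\Lambda$ only on pseudometrics that are metrics on the underlying set of $\yodis$. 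The restriction property already gives the ``only if'' direction of the third property: if $\Lambda(e)$ is compatible with $X$, then its restriction $e$ is compatible with the subspace topology of $\yodis$, which is discrete.

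With this in hand, we compose the arc from \Cref{thm:metric-component-nonborel} (or from \Cref{lem:closed-parameter-embedding}) with $\Lambda$. This gives a topological embedding $\Lambda\circ\Psi\colon[0,1]\to\yocpm{X}$ with
\[
(\Lambda\circ\Psi)^{-1}\bigl(\yometsp{X}\bigr)=[0,1]\setminus S.
\]
If $\yometsp{X}$ were Borel in $\yocpm{X}$, its trace on the compact arc $L=\Lambda\Psi([0,1])$ would be Borel in $L$. Since $L$ is homeomorphic to $[0,1]$, this would make $[0,1]\setminus S$, and hence $S$, Borel in $[0,1]$, contradicting \Cref{prop:dense-aleph1-nonborel-interval}. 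This is the same argument as in \Cref{thm:bounded-discrete-nonborel}.

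The main obstacle is the third property, namely that the extension of a non-compatible metric on $\yodis$ is not compatible on $X$, together with continuity of $\Lambda$ along the arc. The ``only if'' direction is immediate from restriction. The converse, that extending a compatible metric yields a compatible metric, is what Ishiki's extensor theorem provides. If only continuity of $\Lambda$, rather than isometry, is available, that suffices: the composite is still an injective continuous map from the compact space $[0,1]$ into a Hausdorff space, hence an embedding.
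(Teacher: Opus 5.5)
Your argument is correct, but the transfer step works differently from the paper's.

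\textbf{The paper's route.} It moves an entire component $\yoccl$ of $\yometsp{\yodis}$ into $\yometsp{X}$. Since $\widetilde E$ is isometric and $\yocpm{\yodis}$ is complete, $\widetilde E(\yocpm{\yodis})$ is closed in $\yocpm{X}$. Your restriction observation gives $E(\yometsp{\yodis})=\widetilde E(\yocpm{\yodis})\cap\yometsp{X}$. Hence $E(\yoccl)$ is a relatively closed isometric copy of $\yoccl$ in $\yometsp{X}$, and Borelness of $\yometsp{X}$ would contradict \Cref{thm:metric-component-nonborel}.

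\textbf{Your route.} You pull $\yometsp{X}$ back along the continuous map $\Lambda\circ\Phi\colon[0,1]\to\yocpm{X}$. The identity $(\Lambda\circ\Phi)^{-1}(\yometsp{X})=[0,1]\setminus S$ follows from \Cref{lem:metric-coding} and the two directions of your property (3). This is more economical. It bypasses \Cref{thm:metric-component-nonborel}, the amalgamation $m_t$, and all closedness considerations. It also needs only continuity of $\Lambda$ along the arc, not the embedding property, since preimages of Borel sets under continuous maps are Borel. Your hedge in (1) is harmless, because every $d_t$ takes values in $[0,4]$ and $\yometdis{\yodis}(d_s,d_t)\le 2\abs{s-t}$.

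\textbf{What your route requires.} $\Lambda$ must be defined on the non-compatible metrics $d_t$ with $t\in S$. Your fallback of using only ``genuine metrics on the underlying set'' does not remove this requirement, since those $d_t$ are not in $\yometsp{\yodis}$. This is fine here, because \cite[Theorem~1.1]{Ishiki2024IsometricExtensor} provides $\widetilde E$ on all of $\yocpm{\yodis}$, exactly as the paper uses it.

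\textbf{What the paper's route buys.} Its closed-copy formulation records the structural fact that each component of $\yometsp{\yodis}$ sits as a closed subset of $\yometsp{X}$. The paper reuses this formulation in \Cref{thm:ultrametric-borel-separability}. There the cited extensor is defined only on $\youltsp{\yodis}{R}$, so your pullback along the whole arc would not be available as stated.
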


\begin{proof}
Suppose that
$X$
is nonseparable,
and fix a compatible metric
$r\in \yometsp{X}$
on
$X$.
Considering
separated sets in
$(X, r)$,
we can take
a closed discrete subspace
$\yodis$
of
$X$
with
$\card(\yodis)=\aleph_1$.

By \cite[Theorem~1.1]{Ishiki2024IsometricExtensor},
there is an isometric
extensor
\[
\widetilde E\colon\yocpm{\yodis}\longrightarrow\yocpm{X}
\]
whose restriction
$E=\widetilde E|_{\yometsp{\yodis}}$ maps $\yometsp{\yodis}$ into
$\yometsp{X}$.  Since $\yocpm{\yodis}$ is complete, the isometric image
$\widetilde E(\yocpm{\yodis})$ is closed in $\yocpm{X}$.  Moreover,
\[
E(\yometsp{\yodis})
=\widetilde E(\yocpm{\yodis})\cap\yometsp{X}:
\]
if $\widetilde E(d)$ is compatible with the topology of $X$, then its
restriction $d$ is compatible with the subspace topology of $\yodis$.
Thus $E(\yometsp{\yodis})$ is closed in $\yometsp{X}$.  Fix a component
$\yoccl\in\yoslamet{\yodis}$.  Since $\yoccl$ is closed in $\yometsp{\yodis}$,
the subspace $E(\yoccl)$ is closed in $\yometsp{X}$ and isometric to
$\yoccl$.

If $\yometsp{X}$ were Borel in $\yocpm{X}$, then its relatively
closed subspace $E(\yoccl)$ would be Borel in $\yocpm{X}$.
This contradicts \Cref{thm:metric-component-nonborel}.
\end{proof}

\begin{corollary}\label{cor:metric-space-complete-separability}
Let $X$ be a metrizable space.  If $\yometsp{X}$ is completely
metrizable, then $X$ is separable.
\end{corollary}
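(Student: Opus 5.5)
The plan is to reduce the corollary directly to \Cref{thm:borel-separability}. The key observation is that a completely metrizable subspace of a metrizable space is a $G_\delta$ in it, hence Borel. So I will show that $\yometsp{X}$ is a $G_\delta$ subset of $\yocpm{X}$, and then conclude separability of $X$ from \Cref{thm:borel-separability}.

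First I will check that $\yocpm{X}$ is itself a metrizable space in which the classical Alexandroff--Hausdorff theorem applies. It is metrized by $\min\{\yometdis{X},1\}$, and that is all that is needed. The theorem states that a completely metrizable subspace $A$ of any metric space $M$ is a $G_\delta$ in $M$. The proof passes to the completion $\widehat M$: $A$ is $G_\delta$ in $\widehat M$, and intersecting with $M$ shows $A$ is $G_\delta$ in $M$. This needs no separability, which matters here because $\yocpm{X}$ is typically nonseparable. Applying it with $A=\yometsp{X}$ and $M=\yocpm{X}$ shows that $\yometsp{X}$ is a $G_\delta$ subset of $\yocpm{X}$.

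Next, a $G_\delta$ set is in particular Borel in $\yocpm{X}$. The notion of ``Borel in $\yocpm{X}$'' used in \Cref{thm:borel-separability} is the one used throughout the paper: membership in the $\sigma$-algebra generated by the open subsets of $\yocpm{X}$. So $\yometsp{X}$ is Borel in $\yocpm{X}$, and \Cref{thm:borel-separability} gives that $X$ is separable.

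The only step needing care is the appeal to the $G_\delta$ characterization without separability. I expect no real obstacle. If one prefers to avoid the completion, one can argue directly. Let $\rho$ be a complete metric on $\yometsp{X}$ and let $\overline{\yometsp{X}}$ be the closure in $\yocpm{X}$. For each $n$, let $U_n$ be the set of points of $\overline{\yometsp{X}}$ having a neighbourhood $V$ with $\operatorname{diam}_\rho(V\cap\yometsp{X})<1/n$. Each $U_n$ is relatively open, and $\bigcap_n U_n=\yometsp{X}$ by completeness of $\rho$. This exhibits $\yometsp{X}$ as a $G_\delta$ in $\yocpm{X}$, since the closed set $\overline{\yometsp{X}}$ is itself a $G_\delta$ in the metrizable space $\yocpm{X}$.
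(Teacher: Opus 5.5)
Your proposal is correct and is the argument the paper leaves implicit: the corollary is stated without proof immediately after \Cref{thm:borel-separability}, and the intended step is that a completely metrizable subspace of the metrizable space $\yocpm{X}$ is a $G_\delta$, hence Borel, so that theorem applies. You rightly note that this $G_\delta$ fact needs no separability, which matters because $\yocpm{X}$ is typically nonseparable, and your direct proof of that fact is sound.
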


Combining
\cite[Corollary~1.2]{zbMATH07960706}
and
\Cref{cor:metric-space-complete-separability},
we obtain:
\begin{corollary}\label{cor:bounded-metric-complete-sigma-compact}
Let $X$ be a metrizable space.  Then
$\yometbd{X}$ is completely metrizable if and only if
$X$ is $\sigma$-compact.
\end{corollary}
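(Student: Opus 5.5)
The plan is to prove the two implications separately, combining Koshino's separable characterization \cite[Corollary~1.2]{zbMATH07960706} with \Cref{cor:metric-space-complete-separability}. The only new content is the direction ``$\yometbd{X}$ completely metrizable $\Rightarrow$ $X$ separable''. After that, Koshino's theorem handles the rest. The reverse direction is immediate, since a $\sigma$-compact metrizable space is separable, and Koshino's corollary then gives complete metrizability of $\yometbd{X}$.

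For the forward direction, assume $\yometbd{X}$ is completely metrizable. I would not pass through $\yometsp{X}$. Instead I would rerun the argument of \Cref{thm:borel-separability} with bounded metrics. Complete metrizability means $\yometbd{X}$ is a $G_\delta$ in its completion. The closure of $\yometbd{X}$ in $\yocpm{X}$ is complete, being a closed subset of a complete metric space. Hence $\yometbd{X}$ is a $G_\delta$ subset of this closure, and in particular it is Borel in $\yocpm{X}$.

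Now suppose $X$ is nonseparable. As before, take a closed discrete $\yodis\subseteq X$ with $\card(\yodis)=\aleph_1$, and take the isometric extensor $\widetilde E$ of \cite[Theorem~1.1]{Ishiki2024IsometricExtensor}. The key point to verify is that $\widetilde E$ sends bounded metrics to bounded metrics, so that
\[
\widetilde E(\yocpm{\yodis})\cap\yometbd{X}=\widetilde E(\yometbd{\yodis}).
\]
This holds once one notes that $\widetilde E(d)$ restricts to $d$ on $\yodis$, so boundedness passes back to $\yodis$ under restriction. In the forward direction, I expect the construction of \cite{Ishiki2024IsometricExtensor} to preserve boundedness; I would check its explicit form.

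Granting this, $\widetilde E(\yometbd{\yodis})$ is a relatively closed subset of the Borel set $\yometbd{X}$, hence Borel in $\yocpm{X}$. Pulling back along the isometric embedding $\widetilde E$, whose image is closed, makes $\yometbd{\yodis}$ Borel in $\yocpm{\yodis}$. This contradicts \Cref{thm:bounded-discrete-nonborel}. Therefore $X$ is separable, and \cite[Corollary~1.2]{zbMATH07960706} gives $\sigma$-compactness.

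The main obstacle is the boundedness-preservation property of the extensor. If it fails, I would use a fallback. The map $d\mapsto\min\{d,1\}$ is a continuous retraction of $\yometsp{X}$ onto $\{d\in\yometbd{X}\mid d\le 1\}$, and it preserves compatibility. With this truncation, the closed copy $F$ of $[0,1]\setminus S$ from \Cref{lem:closed-parameter-embedding} transfers into $\yometbd{X}$ as a closed set. The truncation is harmless on that copy because the metrics $d_t$ take values at most $4$, so one rescales them by $1/4$ before truncating. This again yields a closed non-Borel subset, and the same contradiction follows.
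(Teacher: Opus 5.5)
Your proof is correct, but it is routed differently from the paper's. The paper's proof is a one-line combination of Koshino's corollary with \Cref{cor:metric-space-complete-separability}, which is about $\yometsp{X}$ rather than $\yometbd{X}$. The link it leaves implicit is your own truncation map, applied globally. The map $T(d)=\min\{d,1\}$ is $1$-Lipschitz on $\yocpm{X}$, and $\yometsp{X}=T^{-1}(\yometbd{X})$, because a continuous pseudometric is a compatible metric if and only if its truncation is (small balls coincide). So if $\yometbd{X}$ is completely metrizable, it is $G_\delta$ in $\yocpm{X}$. Then $\yometsp{X}$ is also $G_\delta$ there, hence completely metrizable, and \Cref{cor:metric-space-complete-separability} gives separability.

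You instead rerun \Cref{thm:borel-separability} for bounded metrics via \Cref{thm:bounded-discrete-nonborel}. Your main route rests on the extensor sending bounded metrics to bounded metrics. You have not verified this, and the paper does not assert it, so it should not be your primary argument.

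Your fallback closes the gap on its own. Set $\Lambda(t)=\min\{\widetilde E(d_t/4),1\}$.
\begin{itemize}
\item $\Lambda$ is bi-Lipschitz on $[0,1]$ (evaluate at $(p,q)$).
\item For $t\notin S$, $\Lambda(t)$ lies in $\yometbd{X}$.
\item For $t\in S$, $\Lambda(t)$ is not in $\yometsp{X}$: its restriction to $\yodis$ is $d_t/4$, which is not compatible with the discrete topology.
\end{itemize}
Hence $\Lambda^{-1}(\yometbd{X})=[0,1]\setminus S$, and Borelness of $\yometbd{X}$ would make $[0,1]\setminus S$ Borel in $[0,1]$. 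You should state this restriction step for $t\in S$ explicitly. Closedness of the copy is not actually needed; the preimage argument suffices.

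In comparison, the paper's route is shorter because it reuses the unbounded result wholesale. Yours is self-contained for bounded metrics. Through the fallback, it needs no property of the extensor beyond those already used in \Cref{thm:borel-separability}.
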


We denote by
$\yocomp{X}$
the set of all complete metrics in
$\yometsp{X}$.

\begin{theorem}\label{thm:complete-metric-nonborel}
Let $\yodis$ be a discrete space of cardinality
$\aleph_1$.  For every
component $\yoccl\in\yoslamet{\yodis}$,
the space
$\yoccl\cap\yocomp{\yodis}$
is non-Borel in
$\yocpm{\yodis}$.
  Consequently,
$\yocomp{\yodis}$
 is also non-Borel.
\end{theorem}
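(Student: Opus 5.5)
We reuse the embedding $\Psi$ from the proof of \Cref{thm:metric-component-nonborel} and check that it still codes $[0,1]\setminus S$ when we restrict to complete metrics. Fix $d\in\yoccl\cap\yocomp{\yodis}$; such a $d$ exists, and we may even take it inside the given component $\yoccl$. Indeed, for any $d'\in\yoccl$ the metric $d'+\rho$, where $\rho(x,y)=1$ for $x\neq y$, lies at distance $1$ from $d'$. It is compatible with the discrete topology and is complete, since every Cauchy sequence is eventually constant. So we fix such a complete $d\in\yoccl$, and build $E$, $\theta$, $e_t$ and the amalgamated metrics $m_t$ exactly as before. The claim to prove is
\[
m_t\in\yoccl\cap\yocomp{\yodis}
\quad\Longleftrightarrow\quad
t\in[0,1]\setminus S .
\]

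\emph{Membership in $\yoccl$.} For every $t\in[0,1]\setminus S$ we have $m_t\in\yoccl$, since $\yometdis{\yodis}(m_t,d)<\infty$. On $E$ both $m_t=e_t$ and $d$ are bounded by $4$ and $2h$ respectively. On $\yodis\setminus E$ the two metrics agree. On mixed pairs, $\abs{m_t(x,a)-d(x,a)}\le e_t(z,a)+d(z,a)\le 4+h$.

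\emph{Completeness.} Suppose $t\notin S$. A Cauchy sequence for $m_t$ either has a tail in $\yodis\setminus E$ (together with $z$) or a tail in $E$, or else it jumps between the two sides infinitely often. In the last case the mixed distances force it to converge to $z$ in $m_t$, and since $m_t$ is discrete it must be eventually equal to $z$. A tail in $(\yodis\setminus E)\cup\{z\}$ converges because $d$ is complete. For a tail in $E$ we need $e_t$ to be complete. Inside a hedgehog $C_\alpha$ with $t\neq w_\alpha$, all edge lengths are at least $\abs{t-w_\alpha}>0$. The distances between distinct points of different pieces are $4$, and the pair $\{p,q\}$ is finite. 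Hence $e_t$ is uniformly discrete on each piece, and a Cauchy sequence is eventually inside one piece and then eventually constant. So $m_t$ is complete. Conversely, if $t\in S$, then $m_t$ is not even compatible, by the equivalence already shown in \Cref{thm:metric-component-nonborel}.

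\emph{Closedness and conclusion.} With the claim in hand, the set $\Psi([0,1]\setminus S)$ lies in $\yoccl\cap\yocomp{\yodis}$ and is closed in it. This follows because it is closed in $\yoccl$ by the argument of \Cref{thm:metric-component-nonborel}, and it is contained in $\yoccl\cap\yocomp{\yodis}$. Suppose $\yoccl\cap\yocomp{\yodis}$ were Borel in $\yocpm{\yodis}$. Put $L=\Psi([0,1])$. Then $L\cap\yoccl\cap\yocomp{\yodis}=\Psi([0,1]\setminus S)$ by the claim, so this set would be Borel in the compact set $L$. Since $\Psi$ is a homeomorphism onto $L$, the set $[0,1]\setminus S$, and hence $S$, would be Borel in $[0,1]$. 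This contradicts \Cref{prop:dense-aleph1-nonborel-interval}.

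For the final sentence of the theorem, recall that $\yoccl$ is clopen in $\yometsp{\yodis}$, so $\yoccl=U\cap\yometsp{\yodis}$ for some open $U\subseteq\yocpm{\yodis}$. If $\yocomp{\yodis}$ were Borel, then $\yoccl\cap\yocomp{\yodis}=U\cap\yocomp{\yodis}$ would be Borel, which we have just ruled out.

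The main obstacle is the completeness half of the claim, namely that $e_t$ is complete for $t\notin S$. It comes down to the uniform lower bound on edge lengths inside each hedgehog, and this must be checked carefully for sequences that move across different $C_\alpha$.
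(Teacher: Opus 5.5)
Your proposal is correct and follows the paper's proof: you reuse the embedding from \Cref{thm:metric-component-nonborel}, check that $m_t$ is complete exactly when $t\notin S$, and rerun the argument of \Cref{thm:bounded-discrete-nonborel}. Your explicit choice of a complete base metric $d=d'+\rho$ in $\yoccl$ supplies a point the paper's short proof leaves implicit, since completeness of $m_t$ genuinely requires $d$ to be complete; the paper states this only in the ultrametric analogue. Your clopen-set argument for the final sentence is likewise a useful added detail.
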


\begin{proof}
We first note that the metrics
  $d_t$
  constructed in
  \Cref{lem:metric-coding}
  and
  $m_t$
  in
  \Cref{thm:metric-component-nonborel}
  are
  complete if
  $t\notin S$.

By the preceding argument, the closed copy of
$[0,1]\setminus S$ lies in the space of complete metrics.
Hence the same argument proves the theorem.
\end{proof}

\begin{corollary}\label{cor:complete-metric-borel-separability}
Let
$X$
be a completely  metrizable space.
If $\yocomp{X}$
is Borel
  in
   $\yocpm{X}$,
   then
   $X$ is
separable.
\end{corollary}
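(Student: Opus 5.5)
We argue by contraposition, following the proof of \Cref{thm:borel-separability}. Suppose that $X$ is completely metrizable and nonseparable. Fix a compatible \emph{complete} metric $r\in\yocomp{X}$; such an $r$ exists because $X$ is completely metrizable. A maximal $\varepsilon$-separated subset of $(X,r)$ is uncountable for some $\varepsilon>0$, and any subset of it is closed and discrete. So we can take a closed discrete subspace $\yodis\subseteq X$ with $\card(\yodis)=\aleph_1$. By \cite[Theorem~1.1]{Ishiki2024IsometricExtensor} we again have an isometric extensor $\widetilde E\colon\yocpm{\yodis}\to\yocpm{X}$ that sends $\yometsp{\yodis}$ into $\yometsp{X}$. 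As before, $\widetilde E(\yocpm{\yodis})$ is closed in $\yocpm{X}$.

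The key step is to show that $\widetilde E$ also respects completeness: $\widetilde E(d)\in\yocomp{X}$ whenever $d\in\yocomp{\yodis}$. The first thing to try is to recall the statement of the extensor theorem in \cite{Ishiki2024IsometricExtensor}. That construction (built from a fixed ambient metric $r$ on $X$) is designed to preserve properties such as completeness, provided the fixed ambient metric is complete; this is why we choose $r$ complete. If the cited theorem does not state this explicitly, we would verify it directly from the construction. A $\widetilde E(d)$-Cauchy sequence either stays near $\yodis$, where $\widetilde E(d)$ restricts to the complete metric $d$, or it is controlled by $r$, which is complete. We expect this verification to be the main obstacle. Granting it, and using that the restriction of $\widetilde E(d)$ to the closed set $\yodis$ is $d$ (a closed subset of a complete space is complete), we obtain
\[
\widetilde E(\yocomp{\yodis})=\widetilde E(\yocpm{\yodis})\cap\yocomp{X}.
\]

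Now fix a component $\yoccl\in\yoslamet{\yodis}$. The set $\yoccl$ is clopen in $\yometsp{\yodis}$, so $\yoccl\cap\yocomp{\yodis}$ is closed in $\yocomp{\yodis}$. Hence its image under $\widetilde E$ is relatively closed in $\yocomp{X}$: it is the intersection of $\yocomp{X}$ with the closed set $\widetilde E(\overline{\yoccl})$. If $\yocomp{X}$ were Borel in $\yocpm{X}$, then $\widetilde E(\yoccl\cap\yocomp{\yodis})$ would be Borel in $\yocpm{X}$. Pulling back along the isometric embedding $\widetilde E$, the set $\yoccl\cap\yocomp{\yodis}$ would then be Borel in $\yocpm{\yodis}$. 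This contradicts \Cref{thm:complete-metric-nonborel}, so $X$ must be separable.
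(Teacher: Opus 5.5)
Your proposal is correct and follows the paper's proof essentially step for step. It uses a closed discrete subspace $\yodis$ of cardinality $\aleph_1$ and the completeness-preserving isometric extensor from \cite[Theorem~1.1]{Ishiki2024IsometricExtensor}. It then uses the identity $\widetilde E(\yocomp{\yodis})=\widetilde E(\yocpm{\yodis})\cap\yocomp{X}$ (the paper intersects the image of $\yometsp{\yodis}$ instead, which is equivalent) and obtains a relatively closed copy of $\yoccl\cap\yocomp{\yodis}$ contradicting \Cref{thm:complete-metric-nonborel}.

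The step you flag as the main obstacle is handled in the paper purely by citation, since the cited theorem supplies the completeness-preserving extensor. Your heuristic Cauchy-sequence sketch is therefore not needed, while your explicit pullback along $\widetilde E$ usefully spells out a step the paper leaves implicit.
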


\begin{proof}
Suppose that
$X$
is nonseparable.  As in the proof of
\Cref{thm:borel-separability}, take a closed discrete subspace $\yodis$ of cardinality
$\aleph_1$.
 By \cite[Theorem~1.1]{Ishiki2024IsometricExtensor}, choose
the isometric extensor
\[
E\colon\yometsp{\yodis}\longrightarrow\yometsp{X}
\]
with closed image constructed in the proof of
\Cref{thm:borel-separability}, and choose it to preserve complete metrics as
provided by the cited theorem.  Since $\yodis$ is closed in $X$, the
restriction of every complete compatible metric on $X$ to $\yodis$ is
complete.  Therefore
\[
E(\yocomp{\yodis})
=E(\yometsp{\yodis})\cap\yocomp{X},
\]
and this set is closed in $\yocomp{X}$.

Fix $\yoccl\in\yoslamet{\yodis}$.  The image
$E(\yoccl\cap\yocomp{\yodis})$ is closed in $\yocomp{X}$ and isometric to
$\yoccl\cap\yocomp{\yodis}$.
Therefore
 \Cref{thm:complete-metric-nonborel}
completes the proof.
\end{proof}

\subsection{Non-Archimedean situation}

Fix a characteristic range set $R$.  Choose a strictly decreasing map
$r\colon\N\to R\setminus\{0\}$ with $r(n)\to0$.  For distinct
$s,t\in\yocantor$, put
\[
h(s,t)=r(\min\{n\in\N\mid s(n)\ne t(n)\}),
\qquad h(t,t)=0.
\]
Then $h$ is an $R$-valued ultrametric inducing the usual topology of
$\yocantor$.  Fix a dense set $S\subseteq\yocantor$ given by
\Cref{prop:dense-aleph1-nonborel-cantor}, and fix a one-to-one enumeration
\[
S=\{w_\alpha\mid\alpha<\omega_1\}.
\]
Put
\[
\yodis=\bigsqcup_{\alpha<\omega_1}C_\alpha,
\qquad C_\alpha=\{y_\alpha\}\sqcup\{x_{\alpha,n}\mid n\in\Z_{\ge 0}\},
\]
and equip $\yodis$ with the discrete topology.

\begin{lemma}\label{lem:ultrametric-coding}
For
 $t\in\yocantor$,
 $\alpha<\omega_1$,
 and
  $n\in\Z_{\ge 0}$,
  put
\[
l_n(\alpha,t)=h(t,w_\alpha)\lor r(n).
\]
Define $d_t(u,u)=0$, and, for distinct $u,v\in \yodis$, define $d_t(u,v)$
symmetrically by
\[
d_t(u,v)=
\begin{cases}
l_n(\alpha,t) &\{u,v\}=\{y_\alpha,x_{\alpha,n}\},\\
l_n(\alpha,t)\lor l_m(\alpha,t)
  &\{u,v\}=\{x_{\alpha,n},x_{\alpha,m}\},\ n\ne m,\\
r(0) &\text{otherwise}.
\end{cases}
\]
Then
 $d_t$
 is an
 $R$-valued
 ultrametric on the underlying set of
 $\yodis$,
 and
\[
d_t\in\youltsp{\yodis}{R}
\quad\Longleftrightarrow\quad
t\in\yocantor\setminus S.
\]
Whenever these equivalent conditions hold, $d_t$ is complete.
\end{lemma}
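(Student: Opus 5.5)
The proof will follow the pattern of \Cref{lem:metric-coding}, with sums replaced by maxima. It has three parts: $d_t$ is an $R$-valued ultrametric, the equivalence with $t\notin S$, and completeness.

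\emph{Ultrametric.} All values lie in $R$: $h$ takes values in $\{0\}\cup r(\N)\subseteq R$, so $l_n(\alpha,t)=h(t,w_\alpha)\lor r(n)$ is one of these values, and maxima of such values also lie in $R$. Positivity on distinct pairs holds because $l_n(\alpha,t)\ge r(n)>0$. On each $C_\alpha$, $d_t$ is the ultrametric hedgehog with centre $y_\alpha$ and spine lengths $l_n(\alpha,t)$. The strong triangle inequality is checked on triples inside $C_\alpha$:
\begin{itemize}
\item for $x_{\alpha,n},y_\alpha,x_{\alpha,m}$, the value at $\{x_{\alpha,n},x_{\alpha,m}\}$ is by definition the maximum of the other two;
\item for three spines $x_{\alpha,n},x_{\alpha,m},x_{\alpha,k}$, we have $l_n\lor l_k\le (l_n\lor l_m)\lor(l_m\lor l_k)$.
\end{itemize}
Across blocks the distance is $r(0)$. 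Every value inside a block is at most $r(0)$, since $h\le r(0)$ and $r(n)\le r(0)$ because $r$ is decreasing. So any triple meeting two blocks has at least two sides equal to $r(0)$, and this maximal value dominates. Hence $d_t$ is an ultrametric.

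\emph{Equivalence.} Suppose $t\notin S$. Then $h(t,w_\alpha)>0$ for every $\alpha$, so
\[
\inf_{v\ne y_\alpha} d_t(y_\alpha,v)\ge h(t,w_\alpha)>0
\quad\text{and}\quad
\inf_{v\ne x_{\alpha,n}} d_t(x_{\alpha,n},v)\ge r(n)>0 .
\]
Every point is therefore isolated, and $d_t$ induces the discrete topology. Conversely, if $t=w_\beta$, then $d_t(y_\beta,x_{\beta,n})=r(n)\to0$, so $\{y_\beta\}$ is not open for $d_t$. Here $r(n)\to0$ is exactly where we use that $R$ is characteristic.

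\emph{Completeness.} Let $(z_k)$ be a $d_t$-Cauchy sequence with $t\notin S$. Eventually all terms have distance less than $r(0)$, so they lie in a single block $C_\alpha$. Inside $C_\alpha$ every distance from a point $u$ to any other point is at least $\delta_u>0$, where $\delta_u=h(t,w_\alpha)$ if $u=y_\alpha$ and $\delta_u=r(n)$ if $u=x_{\alpha,n}$. The main issue is a sequence wandering through infinitely many spines $x_{\alpha,n}$ with distinct $n$. In that case
\[
d_t(x_{\alpha,n},x_{\alpha,m})=l_n(\alpha,t)\lor l_m(\alpha,t)\ge h(t,w_\alpha)>0,
\]
so distinct terms stay a fixed positive distance apart. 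A Cauchy sequence therefore takes only finitely many values from some point on, hence is eventually constant, and thus converges. So $d_t$ is complete.
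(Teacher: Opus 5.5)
Your proposal is correct and takes essentially the same route as the paper. It uses the star ultrametric on each $C_\alpha$ with cross-cluster distance $r(0)$, isolation radii $r(n)$ and $h(t,w_\alpha)$ when $t\notin S$, the sequence $d_t(y_\beta,x_{\beta,n})=r(n)\to0$ when $t=w_\beta$, and completeness from the uniform separation $h(t,w_\alpha)>0$ inside a single cluster. Beyond this, you only spell out details the paper leaves implicit, namely that the values lie in $R$ and that $d_t\le r(0)$ within a block, which is what handles triples meeting two blocks.
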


\begin{proof}
Each $C_\alpha$ carries the star ultrametric with centre $y_\alpha$, and
distinct clusters have distance $r(0)$.  Hence $d_t$ is an $R$-valued
ultrametric.  If $t\notin S$, the isolation radii at $x_{\alpha,n}$ and
$y_\alpha$ are at least $r(n)$ and $h(t,w_\alpha)$, respectively.  Thus
$d_t$ induces the discrete topology.  If $t=w_\beta$, then
\[
d_t(y_\beta,x_{\beta,n})=r(n)\longrightarrow0,
\]
so $y_\beta$ is not isolated.

A $d_t$-Cauchy sequence is eventually contained in one cluster, since
distinct clusters have distance $r(0)$.  If $t\notin S$, distinct points
of $C_\alpha$ have distance at least $h(t,w_\alpha)>0$.  Therefore every
$d_t$-Cauchy sequence is eventually constant.
\end{proof}

\begin{lemma}
\label{lem:closed-ultrametric-parameter-embedding}
The map
$\Phi\colon\yocantor\longrightarrow\youcpm{\yodis}{R}$
by
\[
\Phi(t)=d_t,
\]
is an isometric embedding, and
the  image
$\Phi(\yocantor \setminus S)$
is closed in $\youltsp{\yodis}{R}$.
\end{lemma}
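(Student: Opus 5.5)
The plan is to show that $\mathcal{UD}_{\yodis}^{R}(d_s,d_t)=h(s,t)$ for all $s,t\in\yocantor$, and then to repeat the closedness argument of \Cref{lem:closed-parameter-embedding}, using compactness of $\yocantor$ in place of the evaluation at $(p,q)$. By \Cref{lem:ultrametric-coding}, each $d_t$ is an $R$-valued ultrametric. Since $\yodis$ is discrete, $d_t$ is automatically continuous, so $\Phi$ indeed maps into $\youcpm{\yodis}{R}$.

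\emph{Upper bound.} Fix distinct $s,t$ and put $\varepsilon=h(s,t)\in R$. The only values of $d_s$ and $d_t$ that can differ are the ones built from $l_n(\alpha,\cdot)$; the value $r(0)$ is common to both. By the ultrametric inequality for $h$,
\[
h(s,w_\alpha)\le h(s,t)\lor h(t,w_\alpha),
\quad\text{hence}\quad
l_n(\alpha,s)\le l_n(\alpha,t)\lor\varepsilon .
\]
The same bound then holds for the maxima $l_n\lor l_m$. Exchanging the roles of $s$ and $t$ gives $d_s\le d_t\lor\varepsilon$ and $d_t\le d_s\lor\varepsilon$ pointwise. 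Since $\varepsilon\in R$, this yields $\mathcal{UD}_{\yodis}^{R}(d_s,d_t)\le h(s,t)$.

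\emph{Lower bound.} This is the step that needs an idea, and I will use the density of $S$. Let $k$ be the first index where $s$ and $t$ differ, so $\varepsilon=r(k)$. By density, choose $\alpha$ with $w_\alpha$ agreeing with $s$ on the first $k+1$ coordinates. Then $h(s,w_\alpha)<\varepsilon$, and $h(t,w_\alpha)=\varepsilon$ because $t$ and $w_\alpha$ first differ at $k$. Next choose $n$ with $r(n)<\varepsilon$, which is possible because $r(n)\to0$. This gives
\[
d_t(y_\alpha,x_{\alpha,n})=\varepsilon,
\qquad
d_s(y_\alpha,x_{\alpha,n})=h(s,w_\alpha)\lor r(n)<\varepsilon .
\]
Hence for every $\varepsilon'\in R$ with $\varepsilon'<\varepsilon$, the required inequality $d_t\le d_s\lor\varepsilon'$ fails at the pair $(y_\alpha,x_{\alpha,n})$. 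So the infimum equals $\varepsilon$, and $\Phi$ is an isometric embedding.

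\emph{Closedness.} Let $d\in\youltsp{\yodis}{R}$ be a limit of $d_{t_k}$ with $t_k\in\yocantor\setminus S$. By the isometry, $(t_k)$ is $h$-Cauchy. Since $h$ is a complete metric on the compact space $\yocantor$, we have $t_k\to t$ for some $t\in\yocantor$, and then $d_{t_k}\to d_t$. Because $\mathcal{UD}_{\yodis}^{R}$ is a metric, limits are unique, so $d=d_t$. Finally, $d$ is compatible with the discrete topology, so \Cref{lem:ultrametric-coding} gives $t\notin S$, that is, $d\in\Phi(\yocantor\setminus S)$.
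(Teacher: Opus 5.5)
Your proposal is correct and follows essentially the same route as the paper. The upper bound comes from the strong triangle inequality for $h$, and the lower bound uses the density of $S$ to find a witness pair $(y_\alpha,x_{\alpha,n})$ with $d_s<\rho=d_t$ there. Closedness follows because the isometry makes $(t_k)$ $h$-Cauchy, hence convergent, and Lemma~\ref{lem:ultrametric-coding} then rules out $t\in S$.
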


\begin{proof}
For $s,t\in\yocantor$, the strong triangle inequality for $h$ gives
\[
\youmetdis_{\yodis}^R(d_s,d_t)\le h(s,t).
\]
To prove the reverse inequality, suppose that $s\ne t$ and put
$\rho=h(s,t)$.  By the density of $S$, choose $w_\alpha$ such that
$h(s,w_\alpha)<\rho$, and then choose $n$ such that $r(n)<\rho$.
The strong triangle inequality gives $h(t,w_\alpha)=\rho$, and hence
\[
d_s(y_\alpha,x_{\alpha,n})<\rho
\quad\text{and}\quad
d_t(y_\alpha,x_{\alpha,n})=\rho.
\]
Consequently,
\[
\youmetdis_{\yodis}^R(d_s,d_t)=h(s,t).
\]

Suppose that $d_{t_k}\to d$ in $\youltsp{\yodis}{R}$.  The equality above
shows that $(t_k)$ is Cauchy in $(\yocantor,h)$, so it converges to some
$t\in\yocantor$.  Again by the equality, $d_{t_k}\to d_t$, and hence
$d=d_t$.  Since $d$ induces the discrete topology,
\Cref{lem:ultrametric-coding}
implies that $t\notin S$.
\end{proof}

\begin{theorem}
\label{thm:discrete-ultrametric-nonborel}
Let $\yodis$ be a discrete space of cardinality $\aleph_1$.  Then
$\youltsp{\yodis}{R}$ is  non-Borel, in
$\youcpm{\yodis}{R}$.
\end{theorem}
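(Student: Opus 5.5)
We follow the pattern of the proof of \Cref{thm:bounded-discrete-nonborel}, with \Cref{lem:closed-ultrametric-parameter-embedding} replacing \Cref{lem:closed-parameter-embedding}. Here $\yodis$ is an arbitrary discrete space of cardinality $\aleph_1$. By relabelling through a bijection we may assume it is the space $\yodis=\bigsqcup_{\alpha<\omega_1}C_\alpha$ fixed before \Cref{lem:ultrametric-coding}; the notions involved are invariant under bijections of discrete spaces. Suppose, for contradiction, that $\youltsp{\yodis}{R}$ is Borel in $\youcpm{\yodis}{R}$.

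Let $\Phi\colon\yocantor\to\youcpm{\yodis}{R}$ be the isometric embedding of \Cref{lem:closed-ultrametric-parameter-embedding}. Put $L=\Phi(\yocantor)$ and $F=\Phi(\yocantor\setminus S)$. By \Cref{lem:ultrametric-coding}, a point $d_t\in L$ is compatible exactly when $t\notin S$, so
\[
F=L\cap\youltsp{\yodis}{R}.
\]
The set $L$ is compact, being a continuous image of $\yocantor$, and hence closed. Therefore $F$ is Borel in $L$, since it is the trace on $L$ of a Borel set. (Equivalently: $F$ is closed in $\youltsp{\yodis}{R}$ by \Cref{lem:closed-ultrametric-parameter-embedding}, hence Borel in the ambient space.)

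Pull this back along $\Phi$. Since $\Phi\colon\yocantor\to L$ is a homeomorphism, $\yocantor\setminus S=\Phi^{-1}(F)$ is Borel in $\yocantor$. Then $S$ is Borel in $\yocantor$, which contradicts \Cref{prop:dense-aleph1-nonborel-cantor}.

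The only point that needs care is making sense of ``Borel in $\youcpm{\yodis}{R}$'' for a non-separable ultrametric space. I would use the Borel $\sigma$-algebra generated by the open sets of the ultrametric $\mathcal{UD}^R_\yodis$. The trace of this $\sigma$-algebra on the subspace $L$ is the Borel $\sigma$-algebra of $L$, so the argument above goes through. Nothing beyond what was used in the Archimedean case is needed.
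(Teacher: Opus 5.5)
Your proof is correct and follows the paper's argument. The paper likewise invokes \Cref{lem:closed-ultrametric-parameter-embedding} and repeats the proof of \Cref{thm:bounded-discrete-nonborel}: it obtains that $F=L\cap\youltsp{\yodis}{R}$ is Borel in $L$ and pulls this back to contradict \Cref{prop:dense-aleph1-nonborel-cantor}; your remark that the trace identity alone suffices, so the closedness of $F$ is not really needed, is only a minor streamlining of that route.
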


\begin{proof}
For the sake of contradiction,
suppose that
$\youltsp{\yodis}{R}$
is Borel
in
$\youcpm{\yodis}{R}$.
 By \Cref{lem:closed-ultrametric-parameter-embedding},
$\Phi(\yocantor\setminus S)$ is a closed subspace of
$\youltsp{\yodis}{R}$ homeomorphic to $\yocantor\setminus S$.  Thus the same
 argument as in \Cref{thm:bounded-discrete-nonborel}
gives a contradiction, and hence the conclusion.
\end{proof}

\begin{theorem}
\label{thm:ultrametric-component-nonborel}
Let $\yodis$ be a discrete space of cardinality $\aleph_1$.  Then every
$R$-component $\youccl\in\youslamet{\yodis}{R}$ is
non-Borel in $\youcpm{\yodis}{R}$.
\end{theorem}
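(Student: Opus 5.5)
We follow the proof of \Cref{thm:metric-component-nonborel}, replacing the Archimedean amalgamation by its ultrametric version and \Cref{lem:closed-parameter-embedding} by \Cref{lem:closed-ultrametric-parameter-embedding}. The plan is to find a closed copy of $\yocantor\setminus S$ inside $\youccl$, and then argue exactly as in \Cref{thm:bounded-discrete-nonborel}.

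\textbf{Step 1: choose a large bounded block.} Fix $d\in\youccl$ and a point $z\in\yodis$. Since
\[
\yodis=\bigcup_{\rho\in d(\yodis^2)}\{x\mid d(z,x)\le\rho\}
\]
is an increasing union, we first try to find one ball of cardinality $\aleph_1$. If $d$ is bounded, take $E=\yodis$. Otherwise pick $\rho_k\in R$ with $\rho_k\to\infty$; then some ball $E=\{x\mid d(z,x)\le\rho_k\}$ has cardinality $\aleph_1$, because an $\aleph_1$-sized set cannot be a countable union of countable sets. Set $\rho=\rho_k$ (or $\rho=\sup d$ in the bounded case, if that lies in $R$; otherwise use any element of $R$ bounding the relevant distances). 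Note that $E$ is a $d$-ball. In the ultrametric setting every point of $E$ is a centre, and $d(x,a)=d(x,z)>\rho$ for $x\notin E$ and $a\in E$.

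\textbf{Step 2: transplant the coding.} Choose a bijection $\theta\colon\bigsqcup_{\alpha}C_\alpha\to E$ and transport the metrics of \Cref{lem:ultrametric-coding}, giving $e_t$ on $E$ with values in $R$ bounded by $r(0)$. Define $m_t$ by $m_t=e_t$ on $E^2$ and $m_t=d$ elsewhere. For $x\notin E$ and $a\in E$ we have $m_t(x,a)=d(x,z)$, which is well defined and independent of $a$.

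Checking the strong triangle inequality reduces to two facts. First, $e_t\le r(0)$ and $d\le\rho$ on $E$. Second, $d(x,z)>\rho$. For this we need $r(0)\le\rho$, which is harmless: we may enlarge $\rho$ within $R$, or, if $\rho<r(0)$, replace $r$ by a tail of $r$ so that $r(0)\le\rho$. Neither \Cref{lem:ultrametric-coding} nor \Cref{lem:closed-ultrametric-parameter-embedding} used any specific value of $r(0)$.

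Then $m_t\in\youltsp{\yodis}{R}$ if and only if $e_t$ induces the discrete topology, if and only if $t\notin S$. Moreover
\[
\youmetdis^R_{\yodis}(m_t,d)\le\rho\lor r(0)<\infty,
\]
so every $m_t$ with $t\notin S$ lies in $\youccl$.

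\textbf{Step 3: embedding and closedness.} The distance $\youmetdis^R_{\yodis}(m_s,m_t)$ equals the sup over $E$ of the corresponding discrepancies, which is $\youmetdis^R(d_s,d_t)=h(s,t)$ by \Cref{lem:closed-ultrametric-parameter-embedding}. Hence $\Psi(t)=m_t$ is an isometric embedding of $(\yocantor,h)$.

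If $m_{t_k}\to m\in\youccl$, then $(t_k)$ is Cauchy, so $t_k\to t$ and $m=m_t$. Since $m$ is compatible with the discrete topology, $t\notin S$. Thus $\Psi(\yocantor\setminus S)$ is closed in $\youccl$.

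\textbf{Step 4: conclude.} If $\youccl$ were Borel, then
\[
\Psi(\yocantor\setminus S)=\Psi(\yocantor)\cap\youccl
\]
would be Borel in the compact set $\Psi(\yocantor)\cong\yocantor$. Then $S$ would be Borel, contradicting \Cref{prop:dense-aleph1-nonborel-cantor}.

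\textbf{Main obstacle.} The main obstacle is Step 1/2: making the glued function a genuine $R$-valued ultrametric. This requires that the block $E$ be a ball separated from its complement at a level dominating the coding values $r(0)$. The tail-shift of $r$ handles this, and I would check carefully that rescaling via a tail of $r$ keeps everything $R$-valued.
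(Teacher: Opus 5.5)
Your Steps 2--4 are sound once a radius $\rho\in R$ is available, but Step 1 fails in the bounded case. When $d$ is bounded you take $E=\yodis$ and then need some $\rho\in R$ with $d\le\rho$ on $\yodis^2$. Your fallback ``use any element of $R$ bounding the relevant distances'' assumes such an element exists, and it need not.

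For a counterexample, take $R=[0,1)$ and split $\yodis$ into nonempty pieces $\yodis_n$ ($n\ge0$). Let $d$ equal $1/2$ between distinct points of one piece, and $\max\{1-2^{-n-1},1-2^{-m-1}\}$ between $\yodis_n$ and $\yodis_m$ for $n\ne m$. This is a compatible $R$-valued ultrametric with $\sup d=1\notin R$. With $E=\yodis$ you get $m_t=e_t\le r(0)<1$. Any $\varepsilon$ admissible in the definition of $\youmetdis^R_{\yodis}(e_t,d)$ must satisfy $\varepsilon\ge d(x,y)$ for every pair with $d(x,y)>r(0)$. Hence $\varepsilon\ge 1$, i.e.\ $\varepsilon=\infty$. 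So $\youmetdis^R_{\yodis}(e_t,d)=\infty$, and your copy of $\yocantor\setminus S$ lies in a different $R$-component, not in $\youccl$.

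The repair is to drop the bounded/unbounded case distinction. Every subset of $[0,\infty)$ has a countable cofinal subset, so choose $\rho_k\in\{d(z,x)\mid x\in\yodis\}\subseteq R$ cofinal there and run your pigeonhole argument in all cases. This is exactly the paper's remark that $R$ has a countable cofinal subset. It always yields a radius $\rho\in R$, so $\rho\lor r(0)\in R$ is a finite bound.

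With this fix your proof works and differs from the paper only in the gluing:
\begin{itemize}
\item The paper sends $y_0$ to $z$ and uses the wedge $m_t(x,a)=d(x,z)\lor e_t(z,a)$. This is an ultrametric with no relation required between $r(0)$ and the radius.
\item You keep $m_t=d$ off $E^2$, so you must arrange $r(0)\le\rho$. Your tail shift of $r$ does this correctly: a tail still induces the Cantor topology, so $S$ stays dense and non-Borel and both lemmas apply verbatim. Replacing $\rho$ by $\rho\lor r(0)\in R$, i.e.\ enlarging the ball, is simpler.
\end{itemize}
In exchange, your gluing makes $\youmetdis^R_{\yodis}(m_s,m_t)=h(s,t)$ immediate, since $m_s$ and $m_t$ differ only on $E^2$.
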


\begin{proof}
We argue as in \Cref{thm:metric-component-nonborel}.  Fix $d\in\youccl$ and
$z\in \yodis$.  Since $R\subseteq[0,\infty)$ has a countable cofinal subset,
some closed $d$-ball $E$ about $z$, of radius $q\in R$, has cardinality
$\aleph_1$.  Transfer the family in \Cref{lem:ultrametric-coding} to $E$
so that its distinguished point
$y_0$ corresponds to $z$,
and denote the
result by $e_t$.  Amalgamate $e_t$ with $d$ on
$(\yodis\setminus E)\cup\{z\}$ by putting
\[
m_t(x,a)=m_t(a,x)=d(x,z)\lor e_t(z,a)
\]
for
$x\in \yodis\setminus E$
and
 $a\in E$.
 Since $d(x,z)>q$ outside $E$,
the same argument as above shows that
$m_t\in\youltsp{\yodis}{R}
\quad\Longleftrightarrow\quad
t\in\yocantor\setminus S$.

For
 $\varepsilon=q\lor r(0)$,
the strong triangle inequality gives
$\youmetdis_{\yodis}^R(m_t,d)\le\varepsilon$
and, just as in
\Cref{lem:closed-ultrametric-parameter-embedding},
$\youmetdis_{\yodis}^R(m_s,m_t)=h(s,t)$.
Thus $t\mapsto m_t$
is a closed isometric embedding of
$\yocantor\setminus S$
into
$\youccl$,
and the conclusion follows as in
\Cref{thm:metric-component-nonborel}.
\end{proof}

\begin{theorem}
\label{thm:ultrametric-borel-separability}
Let $Y$ be an ultrametrizable space.
If
$\youltsp{Y}{R}$ is
Borel
in $\youcpm{Y}{R}$,
then $Y$ is separable.
\end{theorem}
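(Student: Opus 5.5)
We mirror the proof of \Cref{thm:borel-separability}. Suppose $Y$ is ultrametrizable and nonseparable. We also assume that $\youltsp{Y}{R}$ is nonempty; otherwise the statement is vacuous, or else the range set is the one fixed in this subsection. Fix a compatible $R$-valued ultrametric $\rho$ on $Y$. Since $(Y,\rho)$ is nonseparable, some $\varepsilon>0$ admits an uncountable $\varepsilon$-separated set, and we take a subset $\yodis\subseteq Y$ of it with $\card(\yodis)=\aleph_1$. An $\varepsilon$-separated set has no accumulation points, so $\yodis$ is a closed discrete subspace of $Y$.

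Next we invoke the non-Archimedean version of the isometric extensor, \cite[Theorem~1.1]{Ishiki2024IsometricExtensor}. That theorem includes the ultrametric setting for characteristic range sets $R$ and closed subsets of ultrametrizable spaces. It gives a map
\[
\widetilde E\colon\youcpm{\yodis}{R}\longrightarrow\youcpm{Y}{R}
\]
that is isometric for $\youmetdis^R$, extends each pseudo-ultrametric, and sends $\youltsp{\yodis}{R}$ into $\youltsp{Y}{R}$. Because $\youcpm{\yodis}{R}$ is complete under $\youmetdis^R_{\yodis}$ (uniform limits of $R$-valued pseudo-ultrametrics remain such; here we may need $R$ closed, or else argue directly), its isometric image is closed in $\youcpm{Y}{R}$. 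Moreover,
\[
\widetilde E(\youcpm{\yodis}{R})\cap\youltsp{Y}{R}=\widetilde E(\youltsp{\yodis}{R}),
\]
since a compatible ultrametric on $Y$ restricts to a compatible ultrametric on the subspace $\yodis$. Hence $E=\widetilde E|_{\youltsp{\yodis}{R}}$ has relatively closed image in $\youltsp{Y}{R}$.

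Now fix an $R$-component $\youccl\in\youslamet{\yodis}{R}$. It is clopen, hence closed, in $\youltsp{\yodis}{R}$, so $E(\youccl)$ is closed in $\youltsp{Y}{R}$ and isometric to $\youccl$. If $\youltsp{Y}{R}$ were Borel in $\youcpm{Y}{R}$, then $E(\youccl)$ would be Borel in $\youcpm{Y}{R}$. Pulling back along the isometric embedding $\widetilde E$, which is a homeomorphism onto a closed set, $\youccl$ would be Borel in $\youcpm{\yodis}{R}$. This contradicts \Cref{thm:ultrametric-component-nonborel}. Alternatively, \Cref{thm:discrete-ultrametric-nonborel} applied to $\youltsp{\yodis}{R}$ itself gives the same contradiction, and this route avoids components altogether.

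The main obstacle is the availability and exact form of the ultrametric isometric extensor. We need it to be defined on all continuous $R$-valued pseudo-ultrametrics rather than only on metrics, and we need the closedness of its image. If the cited theorem only provides $E$ on $\youltsp{\yodis}{R}$, we would instead argue directly. In that case we show that $E(\youltsp{\yodis}{R})$ is closed in $\youltsp{Y}{R}$: a limit $D$ of extensions $E(d_k)$ restricts to the $\youmetdis^R$-limit $d$ of the $d_k$. This $d$ is compatible with $\yodis$ because $D$ is compatible with $Y$, and continuity of $E$ then gives $D=E(d)$.
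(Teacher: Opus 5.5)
Your argument is correct and is essentially the paper's proof: you extract a closed discrete $\yodis\subseteq Y$ of cardinality $\aleph_1$, apply an ultrametric isometric extensor whose image is relatively closed in $\youltsp{Y}{R}$, and obtain the contradiction from the non-Borelness of an $R$-component in \Cref{thm:ultrametric-component-nonborel}. The paper takes the extensor from \cite[Theorem~4.7]{Ishiki2023Factorization}, which is defined only on $\youltsp{\yodis}{R}$, and proves closedness of its image exactly as in your fallback (restrict the limit to $\yodis^2$, then use isometry and uniqueness of limits), so your hedges about extending to all of $\youcpm{\yodis}{R}$ or needing $R$ closed are unnecessary.
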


\begin{proof}
Suppose that $Y$ is nonseparable.
The
 argument on separated sets  used in the
proof of \Cref{thm:borel-separability} gives a nonempty closed discrete subspace
$\yodis\subseteq Y$
of cardinality
$\aleph_1$.  By
\cite[Theorem~4.7]{Ishiki2023Factorization},
there exists an isometric
extensor
\[
E\colon\youltsp{\yodis}{R}\longrightarrow\youltsp{Y}{R}.
\]
Its image is closed.
Indeed, if $E(d_n)\to e$,
then restriction to
$\yodis^2$ gives
$d_n\to e|_{\yodis^2}$;
the isometry of $E$ and
uniqueness of limits yield $e=E(e|_{\yodis^2})$.

Fix $\youccl\in\youslamet{\yodis}{R}$.  It is clopen, so
$E(\youccl)$ is closed in $\youltsp{Y}{R}$ and isometric to
$\youccl$.  If $\youltsp{Y}{R}$ were Borel, this would
contradict \Cref{thm:ultrametric-component-nonborel}.
\end{proof}

\begin{corollary}\label{cor:ultrametric-space-complete-separability}
Let $Y$ be an ultrametrizable space.  If $\youltsp{Y}{R}$ is completely
metrizable, then $Y$ is separable.
\end{corollary}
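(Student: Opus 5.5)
We will deduce the corollary from \Cref{thm:ultrametric-borel-separability} by way of the classical fact that a completely metrizable subspace of a metric space is a $G_\delta$ subset of that space, hence Borel. So the plan is to check that $\youltsp{Y}{R}$ sits as a subspace of the metric space $(\youcpm{Y}{R},\youmetdis_Y^R)$, and that its complete metrizability makes it Borel there.

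First, $\youltsp{Y}{R}$ carries the topology induced by the ultrametric $\youmetdis_Y^R$ of the ambient space $\youcpm{Y}{R}$. Let $M$ be the closure of $\youltsp{Y}{R}$ in $\youcpm{Y}{R}$. Then $\youltsp{Y}{R}$ is a completely metrizable dense subspace of the metrizable space $M$. By the theorem of Lavrentiev/Alexandroff (every completely metrizable subspace of a metrizable space is a $G_\delta$ in it; see \cite[Theorem~3.11]{MR1321597}), $\youltsp{Y}{R}$ is $G_\delta$ in $M$. Since $M$ is closed in $\youcpm{Y}{R}$, the set $\youltsp{Y}{R}$ is also $G_\delta$, hence Borel, in $\youcpm{Y}{R}$.

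Note that the cited theorem does not require separability. Its proof only uses the complete metric $\rho$ on $\youltsp{Y}{R}$ and the oscillation sets
\[
\{x\in M\mid \text{there is an open } U\ni x \text{ with } \operatorname{diam}_\rho(U\cap\youltsp{Y}{R})<1/n\},
\]
which are open in $M$; their intersection equals $\youltsp{Y}{R}$ by completeness of $\rho$. With this, \Cref{thm:ultrametric-borel-separability} yields that $Y$ is separable.

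The only potential obstacle is confirming that the $G_\delta$ characterization is valid in nonseparable metric spaces. It is, since the argument above uses no separability, and \cite{MR1321597} states it for arbitrary metrizable spaces. This is exactly the same reasoning that derives \Cref{cor:metric-space-complete-separability} from \Cref{thm:borel-separability}.
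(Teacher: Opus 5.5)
Your proposal is correct and follows the paper's own reasoning: the paper states this corollary without a separate proof, treating it as an immediate consequence of \Cref{thm:ultrametric-borel-separability}, since a completely metrizable subspace of a metrizable space is $G_\delta$ (hence Borel) in $\youcpm{Y}{R}$ without any separability assumption. Passing through the closure $M$ is harmless but unnecessary, because the cited $G_\delta$ theorem applies directly in $\youcpm{Y}{R}$.
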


We denote by $\youcompm{Y}{R}$ the set of all complete
ultrametrics belonging to
$\youltsp{Y}{R}$.

\begin{theorem}
\label{thm:complete-ultrametric-nonborel}
Let $\yodis$ be a discrete space of cardinality $\aleph_1$.  For every
$\youccl\in\youslamet{\yodis}{R}$, the space
$\youccl\cap\youcompm{\yodis}{R}$
is  non-Borel in $\youcpm{\yodis}{R}$.
 Consequently, $\youcompm{\yodis}{R}$
 is also non-Borel.
\end{theorem}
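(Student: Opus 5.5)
The plan is to mirror the proof of \Cref{thm:complete-metric-nonborel}, using the closed isometric copy of $\yocantor\setminus S$ built in the proof of \Cref{thm:ultrametric-component-nonborel}. Fix $\youccl\in\youslamet{\yodis}{R}$, a metric $d\in\youccl$ and a point $z\in\yodis$. As in that proof, take the closed $d$-ball $E$ of radius $q$ about $z$ with $\card(E)=\aleph_1$, the transferred family $e_t$ on $E$, and the amalgamated ultrametrics $m_t$. We already know three things: $m_t\in\youccl$ exactly when $t\notin S$; $t\mapsto m_t$ is isometric; and $\{m_t\mid t\in\yocantor\setminus S\}$ is closed in $\youccl$.

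The only new step is to check that $m_t$ is complete whenever $t\notin S$. On $E$, the metric $e_t$ is complete by the last assertion of \Cref{lem:ultrametric-coding}. On $\yodis\setminus E$, however, $d$ is an arbitrary compatible ultrametric on a discrete space and need not be complete. This is the main obstacle, and I would remove it at the start by replacing $d$. Since $\youccl\cap\youcompm{\yodis}{R}$ must be nonempty for the statement to have content, choose $d\in\youccl$ complete to begin with. Is such a $d$ available? For any $d\in\youccl$, the ultrametric $d\lor r(N)$ for large $N$ induces the discrete topology, has every Cauchy sequence eventually constant, and lies within $\youmetdis$-distance $r(N)$ of $d$. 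Hence it is complete and in $\youccl$. (If $d\lor r(N)$ fails to be $R$-valued, use $\max\{d,r(N)\}$ on distinct points, which is $R$-valued because $d$ and $r(N)$ both take values in $R$.)

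With this complete $d$, take a $m_t$-Cauchy sequence for $t\notin S$. If it eventually lies in $E$, it converges by the completeness of $e_t$. If it has infinitely many terms in $(\yodis\setminus E)\cup\{z\}$, those terms form a $d$-Cauchy subsequence and so converge. For a sequence mixing the two sets, note that $m_t(x,a)\ge d(x,z)>q$ for $x\notin E$ and $a\in E$. So mixing can persist only if the terms outside $E$ approach nothing, since all cross distances are bounded below by $q>0$; the sequence therefore eventually stays on one side. Thus $m_t$ is complete, and the closed copy $\{m_t\mid t\notin S\}$ lies in $\youccl\cap\youcompm{\yodis}{R}$. It is closed there because it is closed in $\youccl$.

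Finally, suppose $\youccl\cap\youcompm{\yodis}{R}$ were Borel in $\youcpm{\yodis}{R}$. Then its relatively closed subset $F=\Psi(\yocantor\setminus S)$ would be Borel, and since $F=L\cap\youcpm{\yodis}{R}$-Borel set with $L=\Psi(\yocantor)$, $F$ would be Borel in $L\cong\yocantor$. This contradicts \Cref{prop:dense-aleph1-nonborel-cantor}, exactly as in \Cref{thm:bounded-discrete-nonborel}. For the consequence, note that $\youccl$ is clopen in $\youltsp{\yodis}{R}$. So if $\youcompm{\yodis}{R}$ were Borel, then $\youccl\cap\youcompm{\yodis}{R}$, being its intersection with a clopen subset of the Borel set $\youltsp{\yodis}{R}$… To avoid circularity, I would argue directly instead: $F$ is relatively closed in $\youcompm{\yodis}{R}$ as well, because it is closed in $\youccl$, which is clopen. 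The same contradiction then applies.
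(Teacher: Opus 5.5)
Your proposal is correct and essentially matches the paper's proof. You replace $d$ by the complete metric $d\lor c$ (off the diagonal) in the same $R$-component, and note that the common-point amalgamation of the complete $e_t$ ($t\notin S$) with $d$ is complete because cross distances exceed $q>0$. So the closed isometric copy of $\yocantor\setminus S$ lies in $\youccl\cap\youcompm{\yodis}{R}$, and you conclude as in the earlier theorems.

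Three small cleanups. With $d\lor r(N)$ the only issue is the diagonal, not $R$-valuedness. Any $c\in R\setminus\{0\}$ works, so no large $N$ is needed. And delete the abandoned sentence calling $\youltsp{\yodis}{R}$ Borel, since your direct argument that $F$ is closed in $\youcompm{\yodis}{R}$ already suffices.
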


\begin{proof}
The intersection is nonempty: if $d\in\youccl$ and
$c\in R\setminus\{0\}$, then $d\lor c$ off the diagonal is a complete
compatible ultrametric in the same $R$-component.  Starting with such a
complete $d$, repeat the proof of
\Cref{thm:ultrametric-component-nonborel}.  The metrics $e_t$ are
complete by \Cref{lem:ultrametric-coding}, and the common-point
ultrametric amalgamation of two complete spaces is complete.  Hence the
same closed copy of $\yocantor\setminus S$ lies in
$\youccl\cap\youcompm{\yodis}{R}$.  The rest follows exactly as in
\Cref{thm:complete-metric-nonborel}.
\end{proof}

\begin{corollary}\label{cor:complete-ultrametric-borel-separability}
Let $Y$ be an ultrametrizable space such that
$\youcompm{Y}{R}\ne\emptyset$.  If $\youcompm{Y}{R}$ is
Borel in $\youcpm{Y}{R}$, then $Y$ is separable.  The same conclusion
holds if $\youcompm{Y}{R}$ is completely metrizable.
\end{corollary}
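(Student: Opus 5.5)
We argue by contraposition, following the proof of \Cref{cor:complete-metric-borel-separability} in the non-Archimedean setting. Suppose that $Y$ is nonseparable and $\youcompm{Y}{R}\ne\emptyset$. As in the proof of \Cref{thm:ultrametric-borel-separability}, we obtain a closed discrete subspace $\yodis\subseteq Y$ with $\card(\yodis)=\aleph_1$. By \cite[Theorem~4.7]{Ishiki2023Factorization} we take an isometric extensor
\[
E\colon\youltsp{\yodis}{R}\longrightarrow\youltsp{Y}{R}.
\]
We choose it so that it sends complete ultrametrics to complete ultrametrics; we expect the cited construction to allow this, as in the Archimedean case. Its image is closed in $\youltsp{Y}{R}$ by the restriction argument already given in the proof of \Cref{thm:ultrametric-borel-separability}.

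Next we identify the image with a trace. Since $\yodis$ is closed in $Y$, the restriction to $\yodis$ of a complete compatible ultrametric on $Y$ is complete. Moreover, $E(d)|_{\yodis^2}=d$. Hence
\[
E(\youcompm{\yodis}{R})=E(\youltsp{\yodis}{R})\cap\youcompm{Y}{R},
\]
and this set is closed in $\youcompm{Y}{R}$. Fix $\youccl\in\youslamet{\yodis}{R}$, which is clopen. Then $E(\youccl\cap\youcompm{\yodis}{R})$ is a closed subset of $\youcompm{Y}{R}$, and it is isometric to $\youccl\cap\youcompm{\yodis}{R}$.

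Suppose $\youcompm{Y}{R}$ is Borel in $\youcpm{Y}{R}$. Its relatively closed subset above is then Borel in $\youcpm{Y}{R}$. We must transfer this back to $\yodis$. Restriction $\youcpm{Y}{R}\to\youcpm{\yodis}{R}$ is continuous, and on the closed set $\widetilde E(\youcpm{\yodis}{R})$ it inverts $E$. So the set $\youccl\cap\youcompm{\yodis}{R}$ would be Borel in $\youcpm{\yodis}{R}$, contradicting \Cref{thm:complete-ultrametric-nonborel}.

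If we only have $E$ defined on $\youltsp{\yodis}{R}$, we argue more directly. \Cref{thm:complete-ultrametric-nonborel} yields a closed copy of $\yocantor\setminus S$ inside $\youccl\cap\youcompm{\yodis}{R}$, given by $t\mapsto m_t$. Composing with $E$ gives an isometric map on all of $\yocantor$, extending $t\mapsto E(m_t)$ by continuity, and its image is compact. Call this image $L$. Then $L\cap\youcompm{Y}{R}$ is homeomorphic to $\yocantor\setminus S$. If $\youcompm{Y}{R}$ were Borel, this set would be Borel in $L\cong\yocantor$, which contradicts \Cref{prop:dense-aleph1-nonborel-cantor}. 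To make this work we need that $E$ of a non-compatible limit metric stays non-compatible, or else we simply use the closedness of the copy inside $\youcompm{Y}{R}$.

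For the second assertion, suppose $\youcompm{Y}{R}$ is completely metrizable. Its closed subspace $E(\youccl\cap\youcompm{\yodis}{R})$ is then completely metrizable. This subspace contains a closed copy of $\yocantor\setminus S$, which would therefore be completely metrizable. Hence $\yocantor\setminus S$ would be $G_\delta$ in $\yocantor$, and in particular Borel, which is a contradiction.

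The main obstacle is the existence of an extensor preserving completeness in the ultrametric setting. We would first try to verify that the construction in \cite{Ishiki2023Factorization} gives this, using that $\yodis$ is closed and $R$-discrete.
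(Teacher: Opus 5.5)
Your proposal follows the paper's proof essentially step for step: a closed discrete $\yodis\subseteq Y$ of cardinality $\aleph_1$, the isometric extensor $E$ of \cite[Theorem~4.7]{Ishiki2023Factorization} with closed image, the identity $E(\youcompm{\yodis}{R})=E(\youltsp{\yodis}{R})\cap\youcompm{Y}{R}$, and a closed isometric copy of $\youccl\cap\youcompm{\yodis}{R}$ inside $\youcompm{Y}{R}$.

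\textbf{The gap.} The one real gap is the step you flag yourself and leave open. Nothing in your argument shows that $E$ sends complete ultrametrics to complete ones. Without this, $E(m_t)$ need not lie in $\youcompm{Y}{R}$, and the closed copy of $\yocantor\setminus S$ inside $\youcompm{Y}{R}$ is lost.

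Notice that you never use the hypothesis $\youcompm{Y}{R}\ne\emptyset$. That hypothesis is exactly what fills the gap. If $Y$ admits a complete compatible ultrametric, then $Y$ is completely metrizable. For completely metrizable $Y$, clause (N1) of \cite[Theorem~4.7]{Ishiki2023Factorization} gives an extensor that preserves completeness. This is how the paper argues. It mirrors the role of the assumption ``$X$ completely metrizable'' in \Cref{cor:complete-metric-borel-separability}, so your analogy points to the right fix, but you need to tie it to the hypothesis of the statement.

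\textbf{Smaller points.} Your third paragraph uses an extension $\widetilde E$ of $E$ to all of $\youcpm{\yodis}{R}$. Nothing in the ultrametric setting supplies such an extension, so drop that route.

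Your fourth paragraph is sound without the extra condition you worry about. The set $E(\{m_t\mid t\in\yocantor\setminus S\})$ is closed in $\youcompm{Y}{R}$. Hence it equals $L\cap\youcompm{Y}{R}$, where $L$ is its closure in the complete space $\youcpm{Y}{R}$, and $L$ is an isometric copy of $\yocantor$. This is a more explicit version of the transfer that the paper compresses into the words ``isometric to''.

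Your $G_\delta$ argument for the completely metrizable case is correct. The paper leaves that case implicit, since a completely metrizable subspace of a metric space is $G_\delta$ and hence Borel.
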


\begin{proof}
Suppose that $Y$ is nonseparable,
and take a nonempty closed discrete
subspace
 $\yodis\subseteq Y$ of cardinality $\aleph_1$.
 The
nonemptiness assumption implies that $Y$ is completely metrizable.
Consequently, by
\cite[Theorem~4.7(N1)]{Ishiki2023Factorization}, the extensor $E$ in the
proof of \Cref{thm:ultrametric-borel-separability} can be chosen to preserve
completeness.  Since restriction to the closed subspace $\yodis$
preserves completeness in the reverse direction, it satisfies
\[
E(\youcompm{\yodis}{R})
=E(\youltsp{\yodis}{R})\cap\youcompm{Y}{R}.
\]
Fix $\youccl\in\youslamet{\yodis}{R}$.  Then
$E(\youccl\cap\youcompm{\yodis}{R})$
is closed in $\youcompm{Y}{R}$ and isometric to
$\youccl\cap\youcompm{\yodis}{R}$.
Therefore
\Cref{thm:complete-ultrametric-nonborel}
completes the proof.
\end{proof}


\printbibliography

\end{document}